\documentclass[a4paper, 11pt,reqno,oneside]{amsart}
\usepackage[top = 1in, bottom = 1in, left=1.25in, right=1.25in]{geometry}
\usepackage{setspace} 
\usepackage{amssymb,amsmath,amsthm,graphicx,xcolor,mathtools,mathrsfs}
\usepackage{braket} 
\usepackage[pagebackref]{hyperref}
\hypersetup{colorlinks, linkcolor={red!50!black}, citecolor={green!50!black}, urlcolor={blue!50!black}}
\usepackage[shortlabels]{enumitem}
\usepackage{todonotes}
\usepackage{nicefrac}
\usepackage{bm}

\newtheorem{theorem}{Theorem}
\newtheorem{lemma}[theorem]{Lemma}
\newtheorem{corollary}[theorem]{Corollary}

\newtheorem{claim}[theorem]{Claim}

\DeclareMathOperator{\Gallai}{gc}
\DeclarePairedDelimiterX{\abs}[1]{\lvert}{\rvert}{#1}
\newcommand{\Var}{\operatorname{Var}}

\title{Gallai $3$-colourings of random graphs}

\author[F. S. Benevides]{Fabrício S. Benevides}
\address[F. S. Benevides]{Departamento de
Matemática, Universidade Federal do Ceará (UFC), Av. Humberto Monte, Campus do Pici (Bloco 914). 60.455-760, Fortaleza, Brazil.}
\email{fabricio@mat.ufc.br}

\author[R. C. S. Monteiro]{Rubens C. S. Monteiro}
\address[R. C. S. Monteiro]{Departamento de
Matemática, Universidade Federal do Ceará (UFC), Av. Humberto Monte, Campus do Pici (Bloco 914). 60.455-760, Fortaleza, Brazil.}
\email{cainan14@yahoo.com.br}

\author[G. O. Mota]{Guilherme O. Mota}
\address[G. O. Mota]{Instituto de Matemática e
  Estatística, Universidade de São Paulo, Rua do Matão 1010,
  05508-090 São Paulo, Brazil.}
  \email{mota@ime.usp.br}

\thanks{F.\,S.\,Benevides was supported by CNPq (313558/2025-6), FAPESB (EDITAL
		FAPESB No 012/2022 - UNIVERSAL - NºAPP0044/2023) and CAPES Finance
		Code 001. G. O. Mota was supported by CNPq (420838/2025-2 and
		315916/2023-0) and FAPESP (2024/13859-4 and 2023/03167-5).}

\begin{document}

\onehalfspacing
\begin{abstract}
      A Gallai $k$-colouring of a graph $G$ is a colouring of $E(G)$ with
      $k$ colours that induces no rainbow triangles, that is, a triangle with edges of 3 different colours. We give a first step
      towards estimating the number of Gallai colourings of the Erd\H os--Rényi
      random graph, by proving that for every $\delta > 0$ there are $c$
      and $C$ such that with high probability the number of Gallai
      \mbox{$3$-colourings} of $G(n,p)$ is at least $3^{(1-\delta)\binom{n}{2}p}$
      for $p \leq cn^{-1/2}$, and at most $2^{(1+\delta)\binom{n}{2}p}$
      for $p \geq Cn^{-1/2}$.
\end{abstract}

\maketitle

\section{Introduction} \label{sec:intro}

For $k\in \mathbb{N}$, let $[k] = \{1,\ldots, k\}$. A \emph{Gallai $k$-colouring} of a graph $G$ is a colouring of $G$
with colours $[k]$ that does not contain a rainbow
triangle, that is, a triangle whose 3 edges receive distinct
colours. The term \emph{Gallai colouring} was originally used by
Gyárfás and Simonyi \cite{GySi2004edge}, who consider the particular
case of complete graphs. This nomenclature is due to a relation with a
theorem by Gallai presented in one of his most influential articles
(see \cite{gallai1967transitiv,gallai1967-translation}) about
transitively orientable graphs. Colourings free of rainbow triangles
had previously been studied under the name \emph{Gallai partitions} by
K\"{o}rner, Simonyi, and Tuza \cite{korner1992perfect}. In 2010,
Gy\'{a}rf\'{a}s and S{\'a}rk{\"o}zy~\cite{gyarfas2010gallai}
investigated Gallai colourings in non-complete graphs and demonstrated
structural properties that must be satisfied by every Gallai
colouring. For connections of Gallai colourings with Information
Theory and Ramsey Theory, we refer the reader to~\cite{chen2018gallai,
      chua2013gallai, fox2015erdHos, fujita2011gallai,korner2000graph,
      wu2018all,zhang2018gallai}.

In a different context, motivated by problems of various natures and
by a classic problem of Erd\H{o}s and Rothschild (see,
e.g.,~\cite{erdos1974some}), several authors
\cite{alon2004number,benevides2017edge,
      hoppen2017rainbowfull,pikhurko2017erdHos}
have studied counting problems for the number of colourings that avoid
subgraphs (or subhypergraphs) coloured in a predefined way. In 2019,
improving a result in~\cite{bastos2018counting}, the first author,
Han, and Bastos \cite{2019GallaikCol-JCBT}, and independently, Balogh
and Li \cite{balogh2019typical}, showed that almost every Gallai
$k$-colouring of a sufficiently large complete graph uses only~$2$
colours. The result in \cite{balogh2019typical} uses the Container
Method and assumes that $k$ is constant; in
\cite{2019GallaikCol-JCBT}, an ad-hoc proof is given, and the value of
$k$ can be exponentially large in the number of vertices (this is
tight, as the result is false when $k$ is yet exponentially large but
with a larger base).

We take the first step in the study of the number of Gallai colourings of
the Erd\H os--Rényi random graph, $G(n,p)$, by estimating such number when $k=3$. Let $\Gallai(G)$ denote the number of
Gallai $3$-colourings of a graph $G$ and note that for any graph $G$,
we have $2^{e(G)} \le \Gallai(G) \le 3^{e(G)}$, where $e(G) = |E(G)|$. Roughly speaking,
 our main result in this short note shows that with high probability,
w.h.p.\  for short, the number of Gallai $3$-colourings of $G=G(n,p)$ is
``close'' to one of the extremes $2^{e(G)}$ or $3^{e(G)}$,
whenever $p$ is not ``close'' to the threshold $n^{-1/2}$. 

\begin{theorem}\label{thm:mainresult}
      For every~$\delta>0$, there exist~$c$ and~$C$ such that for~$G=G(n,p)$ the following hold (asymptotically) with high probability.
      \begin{enumerate}[(i)]
            \item \label{resultado-parte1} if $p\leq c n^{-1/2}$, then
                  $\Gallai(G)\geq 3^{(1-\delta)e(G)};$\label{eq:main1}
            \item\label{resultado-parte2} if $p\geq C n^{-1/2}$, then
                  $\Gallai(G)\leq 2^{(1+\delta)e(G)}$.\label{eq:main2}
      \end{enumerate}
\end{theorem}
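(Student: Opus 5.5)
For part~\eqref{resultado-parte1}, the plan is to delete a few edges so that what remains is triangle-free, and then colour freely. If $p\le cn^{-1/2}$, the expected number of triangles in $G=G(n,p)$ is $\binom n3p^3\le c^2 n^2 p/6$, while $e(G)$ is concentrated around $\binom n2 p$. By Markov's inequality, with high probability the number of triangles is at most $\delta' n^2p$ for some $\delta'$ that is small once $c$ is small; we may assume $pn^2\to\infty$, since otherwise the statement is trivial or degenerate. Remove one edge from each triangle to obtain a triangle-free subgraph $H$ with $e(H)\ge e(G)-T\ge(1-\delta)e(G)$. Every $3$-colouring of $H$, extended by giving all removed edges colour $1$, is a Gallai colouring. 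Indeed, any triangle of $G$ contains a removed edge. If it contains two or three removed edges, at least two of its edges have colour $1$. If it contains exactly one, then we must also make sure it is not rainbow, so this case needs more care. The cleaner route is therefore to remove all edges lying in triangles. Their number is at most $3T$, still at most $\delta e(G)$, and after this every triangle consists entirely of colour-$1$ edges. This gives $\Gallai(G)\ge 3^{e(G)-3T}\ge 3^{(1-\delta)e(G)}$.

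For part~\eqref{resultado-parte2}, the plan is to use a union bound over ``templates'' combined with a counting or container-type argument. A Gallai $3$-colouring yields three colour classes. We would like to show that in a typical colouring, for most edges, only two colours are actually ``available''. A natural approach is to encode the colouring by a bounded set of colour patterns on a $\varepsilon$-regular partition, via the sparse regularity lemma, which applies since $G(n,p)$ is upper-uniform w.h.p. Then we would argue as in the dense case of \cite{balogh2019typical,2019GallaikCol-JCBT}. If the reduced coloured multigraph has a pair of clusters where three colours each have density bounded away from zero, then, together with another cluster forming triangles, the sparse counting lemma (the K{\L}R conjecture, now a theorem, valid for triangles when $p\ge Cn^{-1/2}$) produces a rainbow triangle. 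This holds for all but a $2^{-\Omega(n^2p)}$ fraction of choices of the random graph. Consequently, in reduced pairs, essentially at most two colours appear with positive density, apart from a few problematic configurations. The number of colourings is then bounded by (number of partitions and reduced colourings) $\times 2^{e(G)}\times\binom{e(G)}{\le\varepsilon e(G)}3^{\varepsilon e(G)}$, which is $2^{(1+\delta)e(G)}$.

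The main obstacle is the step that rules out three dense colours. In the dense setting, a third colour can be present with positive density in a structured way: the Gallai structure theorem says the reduced graph decomposes into blocks where two colours are used between parts. That case still contributes only about $2^{e}$ colourings, but we must check that the reduced Gallai structure forces, for each pair of clusters, a choice of at most two colours, with only $O(1)$ bits of extra information per pair. A second difficulty is that the K{\L}R counting lemma must hold simultaneously for all the exponentially many colourings. This requires the counting form ``w.h.p.\ every $(\varepsilon,p)$-regular triple of subgraphs of $G$ contains a triangle'', which is exactly the K{\L}R theorem of Conlon--Gowers--Samotij--Schacht at $p\ge Cn^{-1/2}$. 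This is where the threshold enters. Alternatively, one could apply hypergraph containers for rainbow-triangle-free $3$-colourings directly, with the same threshold.
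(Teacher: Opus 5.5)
Your plan follows the paper's architecture in both parts. For (i) you restrict the palette on edges lying in triangles. For (ii) you apply sparse regularity to the three colour classes, pass to reduced graphs, use K{\L}R, and take a union bound over partitions and reduced graphs. However, there is a genuine gap in (ii), at the step you flag yourself and leave as ``we must check''.

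K{\L}R only rules out a reduced pair $\{V_i,V_j\}$ that is dense in all three colours when some third cluster $V_l$ has $\{V_i,V_l\}$ and $\{V_j,V_l\}$ dense in colour sets admitting a rainbow choice. If, for every $l$, these two pairs are dense in a single common colour, there is no contradiction. So ``essentially at most two colours per reduced pair'' does not follow. Gallai's structure theorem cannot be invoked either, because reduced pairs carry \emph{sets} of dense colours rather than single colours.

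What you actually need is $\prod_{ij\in E(R)} f(ij)\le 2^{(1+o(1))\binom{k}{2}}$, where $f(ij)$ is the number of dense colours on $\{V_i,V_j\}$; weight-$3$ pairs must be paid for by weight-$1$ or missing pairs. The paper obtains this from two ingredients:
\begin{itemize}
\item Claim~\ref{afirmacao1}: no triangle formed by pairs of weight at least $2$ contains a weight-$3$ pair, so $f\le w$ for the Gallai function $w$ of $R'$;
\item the extremal inequality $w(R')\le 2^{\binom{k}{2}}$ of Lemma~\ref{lema4.2-HoppenBeneSampaio}.
\end{itemize}
An elementary substitute would also work. W.h.p.\ every pair that is regular in all colours has $G$-density close to $p$, so it lies in $R$, and $R$ misses at most $\varepsilon'\binom{k}{2}$ pairs. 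Two weight-$3$ pairs $ij,il$ force $jl\notin R$, since otherwise a rainbow transversal exists. Each non-edge of $R$ is the base of at most $k$ such cherries, so $\sum_i\binom{d_3(i)}{2}\le \varepsilon' k^3$. Convexity then gives only $O(\sqrt{\varepsilon'})k^2$ weight-$3$ pairs, which costs a factor $2^{O(\sqrt{\varepsilon'})pn^2}$.

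In (i), Markov's inequality does not give a high-probability statement. At $p=cn^{-1/2}$ one has $\mathbb{E}T\approx c^3n^{3/2}/6$ and $e(G)\approx cn^{3/2}/2$. Markov therefore bounds $\Pr[3T>\delta e(G)]$ only by a constant of order $c^2/\delta$, which does not tend to $0$ because $c$ is fixed before $n\to\infty$. You need concentration of $T$. One option is the second moment: $\Var(T)=O(n^3p^3+n^4p^5)=o\big((\mathbb{E}T)^2\big)$ once $np\to\infty$, while $T$ is bounded in probability when $p=O(1/n)$. Another is the DeMarco--Kahn upper-tail bound (Lemma~\ref{lemma:DeM_Kahn}), which the paper uses in the window $p=\Theta(n^{-1/2})$. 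With that repaired, your argument for (i) is the paper's. The only difference is that you give triangle edges one colour rather than two, which affects constants only.
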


In the remainder of the introduction we provide a sketch of the proof
of Theorem~\ref{thm:mainresult}. In this paper, given functions $f=f(n)$ and $g=g(n)$, we write $f \ll g$ if $\lim_{n\to\infty} \frac{f(n)}{g(n)} = 0$.

First we show that for small values
of $p$, that is, as in Theorem~\ref{thm:mainresult}-\ref{resultado-parte1}, the number of Gallai 3-colourings in $G = G(n, p)$ approaches
the number of all 3-colourings of $E(G)$. This is natural
as colouring the edges of $G$ that do not belong to a triangle with
any colour (in $\{1, 2, 3\}$) and colouring the remaining edges with
colours in $\{1, 2\}$ always produces a Gallai colouring.  Therefore,
using $t(G)$ for the number of edges that belong to some triangles in
$G$, we have
\[ \Gallai(G) \geq 3^{e(G)-t(G)} \cdot 2^{t(G)}. \] 
For those ``small'' values of $p$
we consider three separate ranges of values for $p$. If $p \ll 1/n$,
then w.h.p.\ there are no triangles in $G(n,p)$ (this
follows by a classical result in~\cite{bollobas1987threshold}), so
$t(G) = 0$, making the assertion trivial. In the case
$A/n < p \ll 1/\sqrt{n}$, for any constant $A$, using Chebyshev’s
inequality, w.h.p.\ there are almost certainly far fewer
triangles than edges. Thus, if we remove one edge from each triangle
in $G$, the obtained graph will still have asymptotically the same
number of edges of $G$ but no triangles, and those remaining edges can
be coloured with no restriction, from which we conclude the
assertion. Finally, we show that there exists $c > 0$ such that if
$p < c\cdot n^{-1/2}$, but it is not the case that $p \ll 1/\sqrt{n}$,
we can use a similar strategy from the previous case but using a
result from \cite{DeMarco_2011} to estimate the number of triangles in
$G$ (instead of just using Chebyshev's inequality). As a matter of fact, in this last range Chebyshev's inequality alone does not yield the desired result, while in the other range the result from \cite{DeMarco_2011} does not hold.

In order to prove Theorem~\ref{thm:mainresult}-\ref{resultado-parte2},
since $p$ is large enough, there will be many
triangles in $G$, intuitively making it increasingly rare for a Gallai
colouring to use each of the 3 colours a significant number of
times. We use a multicolour version of the sparse Szemerédi's
Regularity Lemma to produce a ``reduced graph'' $R$ from the colouring of~$G$. Such lemma was first proved by Y.\,Kohayakawa and by V.\,Rödl (independently). See, for example, \cite{kohayakawa1997szemeredi,kohayakawa2003szemeredi}.

Instead of directly counting all Gallai $3$-colourings of $G$, we
restrict our counting to those Gallai $3$-colourings of $G$ that give
rise to a particular reduced graph $R$, and finally, we count the
possible number of reduced graphs. This counting method was originated
in \cite{alon2004number} (but using a standard dense version of the
Regularity Lemma). A crucial part of the final estimate requires an
inequality obtained in \cite{benevides2017edge}, which, to be applied
in our context, requires a sparse version of an embedding lemma. By
putting together all the estimates obtained during this counting
process, we conclude that the number of 3-colourings of Gallai in $G$
is bounded above by $2^{(1+\delta)e(G)}$.

In Section~\ref{sec:regularidade} we present the necessary lemmas from the regularity method that we will use in our proof. In Section~\ref{sec:proof} we present the detailed proof of Theorem~\ref{thm:mainresult}. Finally, in Section~\ref{sec:conclusion} we present some concluding remarks and open problems.

\section{The Regularity Method}
\label{sec:regularidade} 

For the upper range of $p$, the proof of our main result
(Theorem~\ref{thm:mainresult}) applies the Regularity Method in its
sparse version to a colored $G=G(n,p)$ \cite{kohayakawa2003szemeredi}. In this section we present the results we need in our proof
and we start by giving the definitions that will allow us to state the
Sparse Regularity Lemma.

Given $p\geq Cn^{1/2}$ and a graph $G$, let $X$ and $Y$ be disjoint
subsets of $V(G)$. The $p$-\emph{density} of $\{X, Y\}$ is defined as
$d_{G,p}(X,Y) = \frac{e_G(X,Y)}{p|X||Y|}$. We say that $\{X,Y\}$ is
\emph{$(\varepsilon, G, p)$-regular} if for every $X'\subseteq X$ and
$Y'\subseteq Y$ with $|X'|\geq\varepsilon|X|$ and
$|Y'|\geq\varepsilon|Y|$ we have
$\left|d_{G,p}(X',Y')-d_{G,p}(X,Y)\right| \leq \varepsilon$.  In the
particular case when $G=(X,Y;E)$ is a bipartite graph, with bipartition $X\cup Y$ and edge set $E$, and $\{X,Y\}$ is
$(\varepsilon, G, p)$-regular, then we say $G$ is
\emph{$(\varepsilon, p)$-regular}. Furthermore, if a bipartite
graph $G$ is $(\varepsilon, d)$-regular with $d=e_G(X,Y)/(|X|\,|Y|)$,
then we say that $G$ is $(\varepsilon)$-regular.

Given a set $V$ of $n$ vertices, a partition $\{V_0,\dots,V_k\}$ of $V$ is
$(\varepsilon,k)$\emph{-equitable} if $|V_0| \leq \varepsilon n$ and
$|V_1|=\dots=|V_k|$. Furthermore, given graphs $G_1,\dots,G_r$ on $V$, an $(\varepsilon,k)$-equitable partition is called $(\varepsilon,
G_1,\dots,G_r, p)$\emph{-regular} if all but at most $\varepsilon \binom{k}{2}$
pairs $\{V_i,V_j\}$, with $i, j \neq 0$, are $(\varepsilon,G_\ell, p)$-regular
for every $\ell\in[r]$. Finally, a graph $G$ with $n$ vertices is
$(\eta,p,D)$\emph{-upper-uniform} if for any disjoint subsets $U,W\subseteq
V(G)$ with $|U|,|W| \geq \eta n$ we have $|d_{G,p}(U,W)| \leq D$.  We will apply
the following ``multicolour'' version of the Sparse Regularity Lemma (see,
e.g.,~\cite[Theorem 4.4]{DuPr17}).

\begin{lemma}\label{lemma:multicolourSparseReg}
      For every $\varepsilon>0$ and $D \geq 1$ and a positive integer $r$, there
      exist $\eta$ and $M$ such that for every $0\leq p\leq 1$, if
      $G_1,\cdots,G_r$ are $(\eta, p, D)$-upper-uniform graphs on the same
      vertex set $V$, then there exists an
      $(\varepsilon,G_1,\cdots,G_r,p)$-regular partition of $V$ with $k+1$
      classes, where $1/\varepsilon \leq k \leq M$.
\end{lemma}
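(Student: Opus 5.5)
The plan is to run the energy-increment argument of Szemerédi's proof, using the sparse (normalised) density $d_{G,p}$ in place of ordinary density and summing the energy over all $r$ colours at once. For a partition $\mathcal{P}=\{V_0,V_1,\dots,V_k\}$ we define the index
\[ q(\mathcal{P}) = \sum_{\ell=1}^{r}\ \sum_{1\le i<j\le k} \frac{|V_i||V_j|}{n^2}\, d_{G_\ell,p}(V_i,V_j)^2 . \]
The only global input is upper-uniformity. If every class has size at least $\eta n$, then $d_{G_\ell,p}(V_i,V_j)\le D$ for each colour, and hence $q(\mathcal{P})\le rD^2/2$. This is the single place where upper-uniformity is used: it gives the bounded energy that sparse graphs would otherwise lack.

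We start from an arbitrary equitable partition into $k_0\ge 1/\varepsilon$ classes. Suppose the current partition is not $(\varepsilon,G_1,\dots,G_r,p)$-regular. Then more than $\varepsilon\binom{k}{2}$ pairs are irregular for some colour $\ell$. For each such pair we take the witnessing sets $X'\subseteq V_i$, $Y'\subseteq V_j$ with $|d_{G_\ell,p}(X',Y')-d_{G_\ell,p}(V_i,V_j)|>\varepsilon$. We refine every $V_i$ by the Venn diagram of all witness sets lying in it, which gives at most $2^{k}$ pieces per class (across all colours, at most $2^{rk}$ counts are absorbed similarly, since one witness per irregular pair suffices).

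By Cauchy--Schwarz (the defect form), the index never decreases under refinement in any colour. For each irregular pair it increases by at least about $\varepsilon^2\cdot\varepsilon^2 |V_i||V_j|/n^2$. Summing over more than $\varepsilon\binom k2$ pairs gives an increment of at least roughly $\varepsilon^5/2$.

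Next we make the refined partition equitable again. We chop each piece into parts of a common size $\lfloor |V_i|/4^{k}\rfloor$ and throw the remainders into $V_0$. The exceptional set then grows by at most $n/2^{k}$, so it stays below $\varepsilon n$ over all iterations, provided $k_0$ is large.

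This chopping step is the main obstacle. Chopping is again a refinement, so the index does not drop. The difficulty lies elsewhere: the Cauchy--Schwarz defect computation and the bound $q\le rD^2/2$ both need every part that appears, including the witness sets $X',Y'$ of size $\varepsilon|V_i|$, to have size at least $\eta n$. Only then is upper-uniformity available, to bound densities by $D$ and to control the loss caused by moving vertices into $V_0$. That loss is bounded by $D$ times the proportion moved, which is fine since at most $2^{-k}$ of the mass is moved.

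So we fix the number of iterations $s\le rD^2/\varepsilon^5$ in advance. We let $M$ be the tower-type bound for the class count after $s$ steps. We then set $\eta=\varepsilon/(M4^{M})$, small enough that every set considered throughout the process has size at least $\eta n$.

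With these choices, the index is at most $rD^2/2$ and rises by at least $\varepsilon^5/2$ at each step in which the partition fails to be regular. Hence the process stops after at most $s$ steps with an $(\varepsilon,G_1,\dots,G_r,p)$-regular partition having $1/\varepsilon\le k\le M$ classes. The parameter $p$ plays no role beyond the normalisation of the density, so the bound is uniform over $0\le p\le1$. (This is exactly the argument of Kohayakawa and Rödl, cf.~\cite{kohayakawa2003szemeredi}, with the sum over colours.)
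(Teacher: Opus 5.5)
The paper does not prove this lemma; it quotes it from the literature (\cite[Theorem~4.4]{DuPr17}, going back to Kohayakawa and R\"odl). The energy-increment scheme you describe is the right one, but your re-equitization step has a genuine gap.

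You pass through the Venn refinement and the chopped partition $\mathcal{Q}$ (remainders kept). You then assert that moving the remainders into $V_0$ costs only about $D$ times the proportion moved, because with $\eta=\varepsilon/(M4^M)$ ``every set considered throughout the process has size at least $\eta n$''. That is false. Venn atoms and remainder pieces can be arbitrarily small, even single vertices, for every fixed $\eta>0$, and upper-uniformity says nothing about $d_{G_\ell,p}(A,B)$ when $|A|<\eta n$.

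For instance, joining one vertex $v$ of a typical $G(n,p)$ to all other vertices keeps the graph $(\eta,p,2)$-upper-uniform once $p\eta^2 n\to\infty$. Yet if $A=\{v\}$ is a remainder piece and $B$ is any new class, the pair contributes $\frac{|A||B|}{n^2}d_{G,p}(A,B)^2=|B|/(p^2n^2)$ to the index, which is unbounded when $p\ll n^{-1/2}$. So $q(\mathcal{Q})-q(\mathcal{P}')$ is not controlled, and your chain of inequalities gives no lower bound on the index of the new equitable partition $\mathcal{P}'$. Note also that the defect form of Cauchy--Schwarz is pure algebra and needs no size hypothesis. Upper-uniformity is needed only to bound the index and to handle the small discarded sets.

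The standard repair never measures energy on tiny sets.
Let $V_i^*$ be the union of the new classes inside $V_i$. For each chosen witness pair, let $X^*=X'\cap V_i^*$ and $Y^*=Y'\cap V_j^*$; each new class lies in one atom, so these are unions of new classes. Then $|V_i\setminus V_i^*|\le 2^{-k}|V_i|$ and $X'\setminus X^*\subseteq V_i\setminus V_i^*$.

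Control the discarded sets by padding. For $Z\subseteq V_i$, enlarge $Z$ inside $V_i$ to size $\max(|Z|,\eta n)$ and apply upper-uniformity, getting $e_{G_\ell}(Z,V_j)\le Dp\max(|Z|,\eta n)|V_j|$. With $\gamma=\max(2^{-k},\eta n/|V_i|)$ this gives
$|d_{G_\ell,p}(V_i^*,V_j^*)-d_{G_\ell,p}(V_i,V_j)|=O(D\gamma)$ and $|d_{G_\ell,p}(X^*,Y^*)-d_{G_\ell,p}(X',Y')|=O(D\gamma/\varepsilon^2)$.

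Now apply the defect Cauchy--Schwarz directly to the new classes inside $V_i^*\times V_j^*$, with $X^*\times Y^*$ as the defect set. Compared with $q(\mathcal{P})$, this yields an increment of at least $\varepsilon^5/2-O(rD^2\gamma)$, provided $D\gamma\ll\varepsilon^3$. The increment is therefore at least $\varepsilon^5/4$ once $k_0$ is large and $\eta M$ is small in terms of $\varepsilon$, $r$, $D$; your choice of $\eta$ is in fact small enough. With this replacement the rest of your argument goes through, with at most $2rD^2/\varepsilon^5$ iterations.
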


Given $G=G(n,p)$ and an $r$-colouring of $E(G)$, denote by $G_i$ the induced
subgraph of $G$ by the edges with colour $i$, for $i\in[r]$. It is not hard to
show that given any constant $0<\eta<1$, for the value of $p$ that we use ($p
\geq Cn^{-1/2} > 1/n$), with high probability $G(n,p)$ is
$(\eta,p,2)$-upper-uniform, which implies that $G_1,\dots,G_r$ are also
$(\eta,p,2)$-upper-uniform graphs. Then, we may apply
Lemma~\ref{lemma:multicolourSparseReg} to obtain an $(\varepsilon,k)$-equitable
$(\varepsilon,G_1,\cdots,G_r,p)$-regular partition $\mathcal{P} =
\{V_0,\cdots,V_k\}$ of $V(G)$. 

Given $\ell \in [r]$ and $\delta >0$, the \emph{reduced graph}
$R_\ell=R(G,\varepsilon,\delta,\ell,\mathcal{P})$ is the graph with vertex set
$[k]$, where $\{i,j\} \in E(R)$ if and only if $\{V_i,V_j\}$ is
$(\varepsilon,G_t,p)$-regular for every $t\in [r]$, and $d_{G_\ell,p}(V_i,V_j)
\geq \delta$.

Given a graph $H$ with vertex set $[k]$, $\varepsilon>0$, 
and integers $n$ and $m$, we define $\mathcal{G}(H,n,m,\varepsilon)$
as the class of graphs with vertex set $\bigcup_{i=1}^{k} V_i$, where
each $V_i$ is an independent set with $n$ vertices and, for every
$ij\in E(H)$, the pair $\{V_i,V_j\}$ is $(\varepsilon)$-regular with
exactly $m$ edges. Furthermore, $\mathcal{F}(H,n,m,\varepsilon)$ is
the subfamily of $\mathcal{G}(H,n,m,\varepsilon)$ whose elements do
not contain a \emph{transversal} copy of $H$, i.e., a copy of $H$
having one vertex in each $V_i$.  We will also use the notion of the
\emph{maximum $2$-density} of a graph $H$, defined (for $|V(H)|\ge 3$)
as
\[
      m_2(H) = \max\left\{\frac{e(F)-1}{|V(F)|-2} : F \subseteq H
            \text{ and } |V(F)| \geq 3\right\}.
\]

\begin{lemma}[Embedding lemma for subgraphs of random
            graphs~\cite{conlon2014klr}] \label{lemma:imersaoGNP}
      For any $0<\delta<1$ and any graph $H$, there exists
      $\varepsilon > 0$ such that for every $\beta>0$, there exists $C>0$
      such that for every $p\geq Cn^{\nicefrac{-1}{m_2(H)}}$, with high
      probability $G=G(n,p)$ does not contain a subgraph from
      $\mathcal{F}(H,n',\delta n'^2 p,\varepsilon)$ for every
      $n'>\beta n$.
\end{lemma}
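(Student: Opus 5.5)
This is the K{\L}R conjecture in the form proved in \cite{conlon2014klr}. The paper only cites it, but here is how I would prove it: a first-moment argument over $G(n,p)$, driven by a counting lemma for the family $\mathcal{F}(H,n',m,\varepsilon)$.

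\textbf{Counting lemma.} I would first establish the following. For every $H$ and $\delta,\alpha>0$ there are $\varepsilon>0$ and $K>0$ such that, for all $n'$ and all $m\ge K n'^{2-1/m_2(H)}$,
\[
\abs{\mathcal{F}(H,n',m,\varepsilon)} \le \alpha^{m}\binom{n'^2}{m}^{e(H)}.
\]
Here the pairs $\{V_i,V_j\}$ with $ij\in E(H)$ are $(\varepsilon)$-regular of density $m/n'^2\ge \delta p$. The assumption $m\ge K n'^{2-1/m_2(H)}$ is the sparseness threshold at which such a bound can hold.

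I would prove the counting lemma with the hypergraph container method. Take the hypergraph whose vertices are the edges of the $e(H)$ bipartite pairs and whose hyperedges are the transversal copies of $H$. A supersaturation statement holds: an $\varepsilon$-regular configuration of density $\ge\delta p$ on all pairs of $H$ contains $\Omega(n'^{v(H)}(m/n'^2)^{e(H)})$ transversal copies. This follows from a dense counting lemma, applied after passing to a random-like subconfiguration. Combining supersaturation with the codegree conditions, which is where $m_2(H)$ and the lower bound on $m$ enter, the container theorem yields a small family of containers. Each container either has some pair of density below $\delta p/2$ or fails regularity in a way that costs an $\alpha^m$ factor when we choose an $m$-edge subset. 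Summing over containers gives the displayed bound.

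\textbf{First moment.} Fix $n'>\beta n$ and $m=\delta n'^2 p$. A bad subgraph of $G(n,p)$ is determined by choosing the sets $V_i$, in at most $2^{v(H)n}$ ways, and a member of $\mathcal{F}$, which is then present in $G$ with probability $p^{e(H)m}$. So the expected number of bad subgraphs is at most
\[
2^{v(H)n}\,\alpha^m\binom{n'^2}{m}^{e(H)}p^{e(H)m}\le 2^{v(H)n}\bigl(\alpha (e/\delta)^{e(H)}\bigr)^m,
\]
using $\binom{n'^2}{m}p^m\le (en'^2p/m)^m=(e/\delta)^m$.

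Now choose $\alpha<(\delta/e)^{e(H)}/2$, which in turn fixes $\varepsilon$; note that $\varepsilon$ depends only on $\delta$ and $H$, as the statement requires. Given $\beta$, choose $C$ large. Since $m_2(H)\ge 1$ (when $H$ has an edge), for $p\ge Cn^{-1/m_2(H)}$ we have
\[
m\ge \delta\beta^2 C n^{2-1/m_2(H)}\ge \delta\beta^2 C n .
\]
This gives both $m\ge Kn'^{2-1/m_2(H)}$ and $2^{-m}\cdot 2^{v(H)n}\le 2^{-n}$. Summing over the at most $n$ values of $n'$, the expected count is $o(1)$, and Markov's inequality finishes the proof.

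\textbf{Main obstacle.} The hard step is the counting lemma, and within it the supersaturation in the sparse regular setting together with the container codegree estimates. This is exactly the content of the K{\L}R conjecture, and I would import the container machinery rather than redo it.
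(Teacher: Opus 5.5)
Your first-moment step is correct, and it is a different route from the one behind the citation. With $m=\delta n'^2p$ you have $\binom{n'^2}{m}p^m\le(e/\delta)^m$. Taking $\alpha<(\delta/e)^{e(H)}/2$ keeps $\varepsilon$ dependent only on $\delta$ and $H$. Choosing $C$ in terms of $\beta$ gives both $m\ge Kn'^{2-1/m_2(H)}$ and $m\ge (v(H)+1)n$, so the union bound over $V_1,\dots,V_{v(H)}$ and $n'$ goes through. (Your claim $m_2(H)\ge 1$ needs a vertex of degree at least $2$. For matchings the lemma is trivial, and the paper only needs $H=K_3$, where $m_2=2$.) \cite{conlon2014klr} proves exactly this probabilistic variant directly. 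You instead deduce it from the stronger counting bound $|\mathcal{F}(H,n',m,\varepsilon)|\le\alpha^m\binom{n'^2}{m}^{e(H)}$. That bound is not the form in \cite{conlon2014klr}: it is the full K{\L}R conjecture, proved later with hypergraph containers by Balogh, Morris and Samotij and by Saxton and Thomason. Quoting that theorem as a black box, your argument is complete.

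The gap is in your sketch of how containers would prove the counting bound. The supersaturation you state is false for $p=o(1)$: an $\varepsilon$-regular configuration of density at least $\delta p$ on each pair of $H$ need not contain $\Omega(n'^{v(H)}(m/n'^2)^{e(H)})$ transversal copies. Sparse regular configurations can be $H$-free. For triangles, blow-ups of Alon's triangle-free pseudorandom graphs give $\varepsilon$-regular triangle-free tripartite graphs of vanishing density. This is exactly why $\mathcal{F}(H,n',m,\varepsilon)$ is nonempty and why the lemma needs the randomness of $G(n,p)$; if your claim held, the lemma would be true for every host graph. So the ``random-like subconfiguration'' step cannot work. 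In the container proof, supersaturation is applied only to \emph{dense} objects. The containers are subgraphs of the complete $v(H)$-partite graph with $\Theta(n'^2)$ edges per pair and $o(n'^{v(H)})$ copies of $H$, so your dichotomy ``some pair has density below $\delta p/2$'' is not the relevant one for them. The missing ingredient is a dense lemma: if a container has at most $\eta n'^{v(H)}$ transversal copies of $H$, with $\eta$ small in terms of $\alpha$ and $H$, then it contains at most $\alpha^{m}\binom{n'^2}{m}^{e(H)}$ configurations that are $\varepsilon$-regular with $m$ edges per pair. One proves this by applying the dense regularity and counting lemmas to the container. The key point is that an $\varepsilon$-regular configuration must spread its edges proportionally over all large rectangles, including those where the container is deficient. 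Either supply this lemma or cite the counting theorem outright.
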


The next result will allow us to show that $G$ contains a subgraph of
the family $\mathcal{G}(H,n,m,\varepsilon)$ with the appropriate
parameters (see, e.g., \cite[Lemma~4.3]{gerke_steger_2005}).

\begin{lemma}\label{lema2motaantiramsey2}
      For every $0<\varepsilon<1/6$, there exists $t$ such that every
      $(\varepsilon)$-regular bipartite graph $B=(V_1,V_2;E)$ contains a
      spanning subgraph $B'=(V_1,V_2;E')$ that is
      $(2\varepsilon)$-regular, containing exactly $\left|E'\right|=m$
      edges, for all $m$ satisfying
      $t|V_1 \cup V_2|\leq m \leq |E|$.
\end{lemma}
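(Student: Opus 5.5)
We sample the subgraph at random. Let $n_1=|V_1|$, $n_2=|V_2|$, $N=n_1+n_2$, $d=|E|/(n_1n_2)$, and for a target $m$ with $tN\le m\le |E|$ let $E'$ be a uniformly random $m$-subset of $E$. Write $q=m/|E|$ and $d'=m/(n_1n_2)=qd$. Then $B'=(V_1,V_2;E')$ is spanning with exactly $m$ edges and density $d'$. We show that with positive probability it is $(2\varepsilon)$-regular in the normalised sense of the paper. That is, for all $X'\subseteq V_1$, $Y'\subseteq V_2$ with $|X'|\ge 2\varepsilon n_1$ and $|Y'|\ge 2\varepsilon n_2$, we want
\[
\Bigl|\frac{e_{B'}(X',Y')}{d'|X'||Y'|}-1\Bigr|\le 2\varepsilon .
\]
Subsets of these sizes are in particular $\varepsilon$-large, so the regularity of $B$ applies to them.

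\textbf{Step 1: the mean.} Fix such $X',Y'$. The regularity of $B$ gives $e_B(X',Y')=(1\pm\varepsilon)\,d|X'||Y'|$. The variable $e_{B'}(X',Y')$ is hypergeometric with mean
\[
\mu=q\,e_B(X',Y')=(1\pm\varepsilon)\,d'|X'||Y'| .
\]
Using $|X'||Y'|\ge 4\varepsilon^2n_1n_2$, this gives $\mu\ge(1-\varepsilon)\,4\varepsilon^2 m\ge 2\varepsilon^2 m$.

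\textbf{Step 2: concentration.} It suffices to show $|e_{B'}(X',Y')-\mu|\le \tfrac{\varepsilon}{2}\mu$. Then
\[
e_{B'}(X',Y')=(1\pm\varepsilon)(1\pm\tfrac{\varepsilon}{2})\,d'|X'||Y'| ,
\]
and $(1+\varepsilon)(1+\varepsilon/2)-1\le 2\varepsilon$ and $1-(1-\varepsilon)(1-\varepsilon/2)\le 2\varepsilon$ for $\varepsilon<1/6$, which is where the hypothesis on $\varepsilon$ is used. Chernoff bounds hold for the hypergeometric distribution (it is dominated in the convex order by the binomial, by Hoeffding). Hence
\[
\Pr\bigl(|e_{B'}(X',Y')-\mu|>\tfrac{\varepsilon}{2}\mu\bigr)\le 2\exp(-\varepsilon^2\mu/12)\le 2\exp(-\varepsilon^4 m/6)\le 2\exp(-\varepsilon^4 tN/6).
\]

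\textbf{Step 3: union bound.} There are at most $2^{n_1}2^{n_2}=2^N$ pairs $(X',Y')$. Choosing $t$ so that $\varepsilon^4t/6>\ln 2+1$, for instance $t=12/\varepsilon^4$, the total failure probability is at most $2^{N+1}e^{-(\ln 2+1)N}<1$. So some $m$-subset $E'$ works.

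The main obstacle is Step 2. One has to keep the normalisations straight: the paper's $(\varepsilon)$-regularity is relative, with densities scaled by $d$. One must also check that the relative error of size $\varepsilon$ inherited from $B$ plus the $\varepsilon/2$ sampling error stays within $2\varepsilon$. Finally, the lower bound $m\ge tN$ is exactly what lets the Chernoff tail beat the $2^N$ union bound even when $B$ is very sparse. If the hypergeometric tail is considered delicate, an alternative is to keep each edge independently with probability $q$, condition on the event of exactly $m$ edges (which has probability $\Omega(m^{-1/2})$), and absorb that polynomial factor into the exponential bound.
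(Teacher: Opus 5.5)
Your proof is correct. The paper gives no proof of this lemma and only cites Lemma~4.3 of Gerke and Steger; your argument (uniformly random $m$-subset of $E$, the hypergeometric Chernoff bound with $\mu\ge 2\varepsilon^2 m$, and a union bound over at most $2^N$ pairs once $t\ge 12/\varepsilon^4$) is the standard random-subgraph proof of that cited result.

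One small inaccuracy: the step $(1\pm\varepsilon)(1\pm\varepsilon/2)\subseteq 1\pm 2\varepsilon$ holds for every $\varepsilon\le 1$, so it is not where $\varepsilon<1/6$ is used. The only place you need a bound on $\varepsilon$ is $\varepsilon\le 1/2$ in Step~1, to get $(1-\varepsilon)4\varepsilon^2 m\ge 2\varepsilon^2 m$.
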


\section{Proof of the main result}
\label{sec:proof}

In this section we prove Theorem~\ref{thm:mainresult}, which follows directly
from Lemmas~\ref{lemma:main1} and~\ref{lemma:main2} below.
Lemmas~\ref{lemma:main1} and~\ref{lemma:main2} deal respectively with the
statements \ref{eq:main1} and~\ref{eq:main2} from Theorem~\ref{thm:mainresult}.

\begin{lemma}\label{lemma:main1}
      For every~$\delta>0$ there exists~$c$ such that the following holds for $G=G(n,p)$. If $p \leq c n^{-1/2}$, then w.h.p.\ we have $\Gallai(G)\geq 3^{(1-\delta)e(G)}$.
\end{lemma}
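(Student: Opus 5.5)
The plan is to exhibit many Gallai colourings via a triangle-free subgraph. Let $T(G)$ be the number of triangles in $G$. Delete one edge from each triangle to get a triangle-free spanning subgraph $G'$ with $e(G')\ge e(G)-T(G)$. Colour $E(G')$ arbitrarily with $[3]$ and all deleted edges with colour $1$. Every triangle of $G$ then contains a deleted edge, so it has an edge of colour $1$. Hence we need a sharper construction, stated next.

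A rainbow triangle needs three distinct colours. So colour the edges of $E(G)\setminus E(G')$ with colour $1$ and the edges of $G'$ arbitrarily from $\{1,2,3\}$. A triangle of $G$ then has at most two edges in $G'$, plus at least one edge of colour $1$. It could still be rainbow if its two $G'$ edges get colours $2$ and $3$. To avoid this, I would follow the introduction's bound $\Gallai(G)\ge 3^{e(G)-t(G)}2^{t(G)}$. Edges lying in no triangle get any of three colours, and edges lying in some triangle get colours from $\{1,2\}$, so no triangle can be rainbow. It thus suffices to show that w.h.p.\ $t(G)\le 3T(G)\le \delta' e(G)$ for suitable $\delta'$, because $3^{e-t}2^{t}\ge 3^{(1-\delta)e}$ once $t\le \delta e$ (using $\log_3 2<1$).

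First I bound $e(G)$ from below. If $n^2p\to\infty$, Chebyshev gives $e(G)=(1+o(1))\binom n2 p$ w.h.p. If $n^2p$ is bounded, then w.h.p.\ $G$ has no triangles at all, since $\mathbb E T=O(n^3p^3)\to 0$. In that case $t(G)=0$ and the claim is trivial. This also covers $p\ll 1/n$.

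Next I bound $T(G)$ against $e(G)$. We have $\mathbb E T=\binom n3p^3\le n^3p^3/6$ and $\mathbb E e\approx n^2p/2$, so
\[
\mathbb E T/\mathbb E e\approx np^2/3\le c^2/3.
\]
It remains to show concentration, namely $T(G)\le 2\,\mathbb E T+o(n^2p)$ w.h.p. For this I split into ranges.
\begin{itemize}
\item If $p\ll n^{-1}$, the case is already done.
\item If $1/n\lesssim p\ll n^{-1/2}$, Markov's inequality suffices. We have $\mathbb E T\ll \mathbb E e$, since $np^2\to0$, so $T\le \omega\,\mathbb E T=o(n^2p)$ w.h.p.\ for a slowly growing $\omega$. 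This holds even where variance bounds are weak.
\item If $p=\Theta(n^{-1/2})$ with $p\le cn^{-1/2}$, Markov gives only that $T\le K\,\mathbb E T$ with probability $1-1/K$, which is not w.h.p. Here I would invoke the upper-tail result of DeMarco--Kahn for triangle counts, which gives $\Pr(T>2\mathbb E T)\to0$ in this regime. Chebyshev could also be checked: $\Var T=O(n^3p^3+n^4p^5)$ is $o((\mathbb E T)^2)$ when $n p\to\infty$ and $n^2p^3\cdot$ stays bounded.
\end{itemize}
I expect the main obstacle to be exactly this concentration of $T$ at $p=\Theta(n^{-1/2})$, and I would handle it with the cited tail bound.

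Finally I combine the bounds. W.h.p.\ $t(G)\le 3T(G)\le 3(2\mathbb E T)+o(n^2p)\le (2c^2+o(1))\,e(G)$. Choose $c$ so that $2c^2(1-\log_3 2)<\delta$, or simply $2c^2\le\delta$. Then $\Gallai(G)\ge 3^{e-t}2^{t}\ge 3^{(1-\delta)e(G)}$.
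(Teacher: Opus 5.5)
Your proof is correct and is essentially the paper's. You rightly discard the delete-one-edge-per-triangle colouring, since it can leave rainbow triangles. You then use $\Gallai(G)\ge 3^{e(G)-t(G)}2^{t(G)}$ with $t(G)\le 3T(G)$, together with:
\begin{enumerate}[(i)]
\item no triangles for $p\ll 1/n$;
\item a first-moment bound for $1/n\lesssim p\ll n^{-1/2}$, where you use Markov instead of the paper's Chebyshev, which is simpler and suffices;
\item the DeMarco--Kahn upper tail at $p=\Theta(n^{-1/2})$.
\end{enumerate}

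Your aside about Chebyshev is also correct, and the condition you attach about $n^2p^3$ is superfluous. Since $\Var(T)=O(n^3p^3+n^4p^5)=o\big((\mathbb{E}T)^2\big)$ whenever $np\to\infty$, Chebyshev alone gives $T\le 2\,\mathbb{E}T$ w.h.p.\ at $p=\Theta(n^{-1/2})$. So the DeMarco--Kahn bound, which the paper suggests is needed in that range, can in fact be avoided.
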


\begin{lemma}\label{lemma:main2} For every~$\delta>0$ there exists~$C$ such that
      the following holds for $G=G(n,p)$. If $p \geq C n^{-1/2}$, then w.h.p.\
      we have $\Gallai(G)\leq 2^{(1+\delta)e(G)}$.
\end{lemma}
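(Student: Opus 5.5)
We follow the counting scheme of Alon et al.\ adapted to the sparse setting. Fix $\delta>0$, and choose in order $\delta'\ll\delta$ (the density threshold for the reduced graphs), $\varepsilon$ from Lemma~\ref{lemma:imersaoGNP} applied with $H=K_3$ and density $\delta'/2$ (shrunk further if needed so that $\varepsilon\ll\delta'$ and $\varepsilon\le 1/12$), then $\eta, M$ from Lemma~\ref{lemma:multicolourSparseReg} with $r=3$, $D=2$, and finally $C$ from Lemma~\ref{lemma:imersaoGNP} with $\beta=(1-\varepsilon)/(2M)$. Note $m_2(K_3)=2$, so the threshold is $n^{-1/2}$, as required. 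We condition on the w.h.p.\ events that $G$ is $(\eta,p,2)$-upper-uniform, that $e(G)=(1+o(1))\binom n2 p$, and that the conclusion of Lemma~\ref{lemma:imersaoGNP} holds.

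\textbf{Step 1 (regularise and encode).} Given a Gallai $3$-colouring of $G$, apply Lemma~\ref{lemma:multicolourSparseReg} to $G_1,G_2,G_3$ and obtain a partition $\mathcal P$ with $k\le M$ parts, together with reduced graphs $R_1,R_2,R_3$ on $[k]$. The number of possible choices of $(\mathcal P,R_1,R_2,R_3)$ is at most $(M+1)^n2^{3M^2}=2^{O(n)}$, which is $2^{o(pn^2)}$ since $pn\to\infty$. It therefore suffices to show that, for each fixed tuple, the number of Gallai colourings producing it is at most $2^{(1+\delta/2)e(G)}$.

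\textbf{Step 2 (no rainbow triangle in the reduced multigraph).} We claim there is no triangle $ijl$ in $[k]$ with $ij\in R_a$, $jl\in R_b$ and $il\in R_c$ for distinct $a,b,c$. Suppose otherwise. Each pair involved is $(\varepsilon,G_t,p)$-regular with $p$-density at least $\delta'$ in its colour, so it has at least $\delta' n'^2p$ edges, where $n'=|V_i|\ge\beta n$. Being $(\varepsilon,p)$-regular in the sparse sense makes the bipartite graph $(\varepsilon')$-regular in the dense normalised sense with $\varepsilon'=\varepsilon\delta'^{-1}$ (up to constants), so we must tune $\varepsilon$ accordingly. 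Lemma~\ref{lema2motaantiramsey2} then lets us thin each pair to exactly $m=\delta'n'^2p/2$ edges while keeping it $2\varepsilon'$-regular; this is allowed because $m\ge t n'$ when $pn\to\infty$. The union of the thinned pairs lies in $\mathcal G(K_3,n',m,2\varepsilon')$, so by Lemma~\ref{lemma:imersaoGNP} it contains a transversal triangle. That triangle is rainbow, a contradiction. The calibration of $\varepsilon$ against $\delta'$ in this step is the delicate point.

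\textbf{Step 3 (counting).} Let $\mathcal S$ be the set of edges of $G$ lying inside some $V_i$, in $V_0$, in irregular pairs, or in colour classes of $p$-density below $\delta'$ within a regular pair. By upper-uniformity, $|\mathcal S|\le(\varepsilon+1/k+3\delta'+\dots)\,2n^2p\le(\delta/10)e(G)$, and the edges of $\mathcal S$ can be chosen and coloured in at most $\binom{e(G)}{\le\delta e(G)/10}3^{\delta e(G)/10}\le2^{(\delta/4)e(G)}$ ways. For every other pair $ij$, the colours are restricted to the set $L_{ij}=\{a: ij\in R_a\}$, so the remaining edges contribute $\prod_{ij}|L_{ij}|^{e_G(V_i,V_j)}$, where $e_G(V_i,V_j)\le(1+\varepsilon)p(n/k)^2$ by regularity of $G$ (which we may add to the partition). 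This reduces to the weighted inequality from \cite{benevides2017edge}: for every edge-labelling of $K_k$ by subsets of $[3]$ that contains no rainbow triangle, $\sum_{ij}\log_2|L_{ij}|\le(1+o(1))\binom k2$. We expect this to be the main obstacle, since it is exactly the extremal Gallai-count problem for the complete graph on the reduced level; we invoke it as a black box. Combining the three bounds gives at most $2^{(1+\delta)e(G)}$ colourings in total.
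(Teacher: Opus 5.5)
Your plan follows the paper's proof. It uses the same multicolour sparse-regularity encoding with $2^{O(n)}=2^{o(pn^2)}$ choices of $(\mathcal P,R_1,R_2,R_3)$. It uses Lemma~\ref{lemma:imersaoGNP}, after thinning with Lemma~\ref{lema2motaantiramsey2}, to rule out a reduced triangle carrying three distinct colours. It closes with the Gallai-function inequality of \cite{benevides2017edge}.

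In two places you are more careful than the paper. First, in Step~2 you note that an $(\varepsilon,G_a,p)$-regular pair of $p$-density at least $\delta'$ is only $(\varepsilon/\delta')$-regular in the normalised sense that Lemmas~\ref{lema2motaantiramsey2} and~\ref{lemma:imersaoGNP} require. The regularity lemma must therefore be run with parameter of order $\varepsilon\delta'$, whereas the paper's Claim~\ref{afirmacao1} passes directly from $(\varepsilon',G_\ell,p)$-regularity to $(\varepsilon)$-regularity. Second, placing the minority-colour edges of regular pairs into $\mathcal S$ pays for choosing which edges they are.

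Three points need repair, none of them conceptual.

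\textbf{The entropy bound.} The estimate $\binom{e(G)}{\le \delta e(G)/10}3^{\delta e(G)/10}\le 2^{(\delta/4)e(G)}$ is false. The binomial factor alone is about $2^{H(\delta/10)e(G)}$, and $H(\delta/10)\ge(\delta/10)\log_2(10/\delta)>\delta/4$ for every $\delta\le 1$. You must choose $\varepsilon,\delta'$ so that $|\mathcal S|\le\gamma e(G)$ with $H(\gamma)+\gamma\log_2 3\le\delta/4$. The paper's condition $6\alpha+H(2\alpha)\le\delta/16$ plays exactly this role.

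\textbf{The pair-density bound.} The bound $e_G(V_i,V_j)\le(1+\varepsilon)p(n/k)^2$ does not follow from regularity of $G$, which says nothing about the density of the pair itself. Upper-uniformity gives only a factor $2$, which would double the main exponent to roughly $2^{2e(G)}$, worse than the trivial $3^{e(G)}$. The bound must come from Chernoff plus a union bound over all pairs of disjoint sets of size at least $\beta n$, which is valid since $pn^2\gg n$. This is what the paper does.

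\textbf{The final inequality.} What you treat as the main black box is not an extremal count of Gallai colourings of $K_k$; irregular pairs carry empty lists, so that count does not apply directly. It is a few-line consequence of Lemma~\ref{lema4.2-HoppenBeneSampaio}. Let $R'$ be the graph of pairs with $|L_{ij}|\ge 2$, and suppose $|L_{ij}|=3$ and $ijl$ is a triangle of $R'$. Picking $b\in L_{jl}$, $c\in L_{il}\setminus\{b\}$ and $a\in L_{ij}\setminus\{b,c\}$ gives exactly the configuration your Step~2 excludes. So every $3$-list pair lies in no triangle of $R'$, hence $|L_{ij}|\le w(ij)$ for the Gallai function of $R'$. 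Therefore $\prod_{ij}|L_{ij}|\le w(R')\le 2^{\binom k2}$ exactly, with no $o(1)$. This is precisely how the paper finishes, via Claim~\ref{afirmacao1} and $f\le w$.
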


\subsection{Lower range of $p$}
We start by analyzing the case where the random graph is very sparse. Before that,
let us state the following result of DeMarco and Kahn~\cite{DeMarco_2011}, which
will be useful for some ``very sparse'' range of $p$. Given a graph $G$, we
write $T(G)$ for the number of triangles in $G$.

\begin{lemma}[\cite{DeMarco_2011}, Theorem~1]
  \label{lemma:DeM_Kahn}
  For every $\xi > 0$, there is $D$ such that if $p > n^{-1}$, then
  for $G=G(n,p)$,
  $$
  \Pr[T(G) > (1+\xi)\mathbb{E}(T(G))] < \exp\left( -D \cdot\min \left(n^2 p^2
      \log(1/p), n^3 p^3 \right) \right).
  $$
\end{lemma}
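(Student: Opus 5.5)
The plan follows the strategy of DeMarco and Kahn: split the triangle count into a ``spread out'' part, which has a Poisson-type tail, and a ``clustered'' part, which can only be large if some small set of edges spans many triangles. Write $\mu=\mathbb{E}(T(G))=\binom n3p^3=\Theta(n^3p^3)$. The target exponent $\min(n^2p^2\log(1/p),\,n^3p^3)$ matches these two mechanisms.

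\emph{The Poisson term $n^3p^3$.} We bound the size $\nu$ of a maximum collection of pairwise edge-disjoint triangles in $G$. By the van den Berg--Kesten (BK) inequality, in the form used by Erd\H{o}s and Tetali, the probability that $G$ contains $s$ edge-disjoint triangles is at most $\mu^s/s!$. Taking $s=(1+\xi/2)\mu$ and applying Stirling's formula gives $\Pr[\nu\ge(1+\xi/2)\mu]\le\exp(-c_\xi\mu)$. This step is routine.

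\emph{Reducing to a clustered configuration.} Suppose $T(G)>(1+\xi)\mu$ but $\nu<(1+\xi/2)\mu$. Fix a maximum edge-disjoint family $\mathcal{T}$. Every triangle of $G$ shares an edge with some member of $\mathcal{T}$, so at least $\tfrac{\xi}{2}\mu$ triangles share an edge with $\mathcal{T}$ without belonging to it. We classify these extra triangles by the codegrees of the edges they share.

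\begin{enumerate}[(a)]
\item \emph{Low codegree.} Suppose most extra triangles meet $\mathcal{T}$ only in edges whose codegree is at most a large constant $K$. Then the number of such triangles is $O(K\nu)$, but we need a sharper count of order $\xi\mu$. We obtain it by a second BK-type estimate on ``linked pairs'': pairs of triangles sharing an edge. Their expected number is $\Theta(n^4p^5)=\mu\cdot O(np^2)$, which is small when $p\ll n^{-1/2}$. For larger $p$ we instead rely on case (b) together with the concentration of codegrees, each of which is $\mathrm{Bin}(n-2,p^2)$.
\item \emph{High codegree.} Otherwise there is a set $F$ of edges, each of codegree exceeding $K\max(np^2,1)$, that together lie in at least $\tfrac{\xi}{4}\mu$ triangles. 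A counting argument then produces a small dense spot. Since a graph with $m$ edges contains $O(m^{3/2})$ triangles, the union of the triangles through $F$ yields a subgraph $S$ with $m$ edges satisfying
\[
\xi n^3p^3\lesssim m^{3/2}, \qquad\text{that is,}\qquad m\gtrsim \xi^{2/3}n^2p^2 .
\]
We should also arrange that $S$ lives on few vertices, roughly $\sqrt m$ vertices in the clique-like case, or a hub with $m/n$ centres.
\end{enumerate}

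\emph{Union bound over dense spots.} We bound the probability that $G$ contains such an $S$. The number of candidate $S$ with $m$ edges on the structured vertex sets is at most $\binom{\binom v2}{m}$ with $v\approx\sqrt{m}$, so each one appears with probability at most $p^m(e v^2/(2m))^m=\exp(-m\log(1/p)+O(m))$. Summing over $m\ge\xi^{2/3}n^2p^2$ gives $\exp(-D\,n^2p^2\log(1/p))$. The hub configuration is handled the same way: it needs $\Omega(n^2p^2)$ edges attached to $O(np^2)$ centres, and it is cheaper to rule out since $p>1/n$.

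\emph{Main obstacle.} The hard step is (b). Passing from ``many triangles through high-codegree edges'' to ``a spanning subgraph with $\Omega(n^2p^2)$ edges on a vertex set small enough that the entropy of choosing it is only $e^{O(m)}$'' is delicate. A naive union bound over all $m$-edge subgraphs of $K_n$ loses a factor $n^{2m}$, which destroys the $\log(1/p)$ gain. My first attempt would be an iterative pruning argument: repeatedly remove vertices of codegree-weighted degree below a threshold, so that what remains is a core with $v\lesssim \sqrt{m}$ vertices or a hub structure. If this fails, I would fall back on DeMarco and Kahn's own tool, a Kim--Vu/Janson-type polynomial concentration applied after truncating high-codegree edges, and accept a slightly worse constant $D$.
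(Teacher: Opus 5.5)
The paper does not prove this lemma; it is quoted from DeMarco and Kahn. Your outline therefore has to stand on its own, and it has genuine gaps.

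\textbf{The decomposition discards the information that makes the event rare.} Since $\nu\le e(G)/3=O(n^2p)$ and $\mu=\Theta(n^3p^3)$, for, say, $p\ge n^{-1/3}$ a typical $G$ has $\nu=o(\mu)$ and all codegrees equal to $(1+o(1))np^2$. So almost all of its roughly $\mu$ triangles are ``extra'', and they meet $\mathcal{T}$ only in low-codegree edges. Hence for $\xi<2$ (e.g.\ $\xi=1$, the value the paper uses) the hypothesis of your case (a) holds with high probability, and no estimate can show it is unlikely. Your tools reflect this. The expected number of linked pairs, $\Theta(n^4p^5)=\Theta(np^2\mu)$, is already of order at least $\mu$ once $p\gtrsim n^{-1/2}$. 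And a single pair with codegree above $Knp^2$ has probability $e^{-\Theta(np^2)}$, far above the target $\exp(-\Omega(n^2p^2\log(1/p)))$, so you cannot condition on ``concentration of codegrees''. In this range you need an honest upper-tail bound for a (truncated) triangle count, not a count of triangles beyond $\nu$.

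\textbf{The ``routine'' step is wrong as written.} For $s=(1+\eta)\mu$, Stirling gives $\mu^s/s!=\exp\bigl((1+\eta)\mu(1-\log(1+\eta))-O(\log\mu)\bigr)$, which exceeds $1$ whenever $\eta<e-1$. The missing factor $e^{-\mu}$ is recovered from all factorial moments: $\mathbb{E}\binom{\nu}{k}\le\mu^k/k!$ gives $\mathbb{E}(1+u)^{\nu}\le e^{u\mu}$. Markov's inequality with $u=\eta$ then yields $\Pr[\nu\ge(1+\eta)\mu]\le\exp\bigl(-\mu((1+\eta)\log(1+\eta)-\eta)\bigr)$. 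This part is repairable.

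\textbf{Case (b), which you rightly call the main obstacle, is not carried out, and you union-bound over the wrong object.} The inequality $e(S)^{3/2}\gtrsim\xi\mu$ only bounds $e(S)$ from below. The union bound, however, needs the candidates for $S$ with $m$ edges to number at most $p^{-(1-c)m}$. The union of the triangles through $F$ fails this even in the extremal examples. In the hub configuration ($\Theta(np^2)$ vertices adjacent to all others), every edge $xy$ of $G$ lies in a triangle with two hub edges, which belong to $F$. So $S$ is essentially $G$ itself and the union bound is vacuous.

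What you would need is to union-bound over a \emph{seed} (a suitable part of $F$), together with a structural statement: any edge set carrying $\Omega(\xi\mu)$ triangles contains $m=\Omega(n^2p^2)$ edges drawn from a family of at most $p^{-(1-c)m}$ candidates, uniformly over cliques, hubs and all their mixtures. That statement is exactly where the factor $\log(1/p)$ is won; it is the real content of the theorem. ``Iterative pruning, or else DeMarco and Kahn's own tool'' is a plan, not an argument.

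As it stands, your proposal explains heuristically why the exponent should be $\min(n^2p^2\log(1/p),n^3p^3)$, but it does not prove it. Citing the theorem, as the paper does, is the appropriate route here.
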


\begin{proof}[Proof of Lemma~\ref{lemma:main1}]
  Fix $\delta > 0$ and set $c$ such that $6c^2 = \delta$. Let  $p \leq c
  n^{-1/2}$, and let $G=G(n,p)$ for sufficiently large $n$.      
  We shall prove that with high probability $\Gallai(G)\geq 3^{(1-\delta)e(G)}$.
  
  Recall that $t(G)$ stands for the number of edges of $G$ that
  belongs to some triangle. Obviously, we can assign any color to the
  remaining $e(G)-t(G)$ edges without risking creating a rainbow
  triangle. Thus, fixing two colours and using them to colour those
  $t(G)$ edges directly implies that
      \begin{equation}\label{eq:estimativaPPequeno}
            \Gallai(G)\geq 3^{e(G)-t(G)} 2^{t(G)}.
      \end{equation}
      For $p\ll n^{-1}$, it is well known that, with high probability, there are no triangles
      in~$G$, which implies $t(G)=0$. Consequently
      $\Gallai(G)= 3^{e(G)}\geq 3^{(1-\delta)e(G)}$. Thus, we may assume
      that $p \geq an^{-1}$ for some constant $a>0$.

      Next, consider the case where $p \ll n^{-1/2}$ but it does not hold that $p \ll 1/n$. Then, one can check that
      \[
            \Var(t(G)) \leq (n^4p^5 + n^3p^3)^2 \ll (pn^2)^2,
      \] which, combined with the fact that $e(G) = (1+o(1)){\binom{n}{2}}p$ with high probability and Chebyshev's inequality, implies that $t(G) =
      o(e(G))$. Together with \eqref{eq:estimativaPPequeno}, for
      sufficiently large $n$, this implies that with high probability
      \begin{equation*}
            \Gallai(G)\geq 3^{e(G)-t(G)} 2^{t(G)} \geq 3^{(1-\delta)e(G)}.
      \end{equation*}

      It remains only the case where $an^{-1/2} < p < cn^{-1/2}$, for some constant $a > 0$. Note that
      we cannot apply Chebyshev's inequality as in the previous case,
      because $p$ is not small enough. Instead, we use
      Lemma~\ref{lemma:DeM_Kahn}, with $\xi=1$, and the fact that $p \gg n^{-1}$ to
      conclude that with high probability the number of triangles in
      $G$ satisfies \[
            T(G) \leq 2p^3\binom{n}{3} \le n^3p^3 \leq 4 \cdot p^2n \cdot e(G),
      \]
      where the last inequality follows from the fact that w.h.p.\ $e(G) \geq pn^2/4$. In view of this, since each triangle
      contains three edges, the number of edges of $G$ that belong to
      triangles satisfies $t(G) \leq 3 \cdot T(G) \leq 12\cdot p^2n \cdot
      e(G)$. Therefore, as $\log_3(2) > 1/2$,
      \begin{equation*}
            \Gallai(G)
            \geq 3^{e(G)-t(G)} 2^{t(G)}
            \geq 3^{e(G)-t(G)/2}
            \geq 3^{(1-6np^2)e(G)}
            \geq 3^{(1-6c^2)e(G)}
            = 3^{(1-\delta)e(G)},
      \end{equation*}
      which finishes the proof of this lemma.
\end{proof}

Given a graph $G$, define the \emph{Gallai function}
$w : E(G)\to \{2,3\}$ as the function that assigns weight $2$ to edges
that belong to a triangle in $G$ and weight $3$ to all other
edges. Furthermore, with some abuse of notation, write $w(G)=\prod_{e\in E(G)}w(e)$. The following
result from \cite{benevides2017edge} will be useful.

\begin{lemma}[\cite{benevides2017edge}, Lemma 4.2] \label{lema4.2-HoppenBeneSampaio}
      Let $G$ be an $n$-vertex graph. If $w(G)$ is the Gallai function, then
      $w(G)\leq 2^{\binom{n}{2}}$.
\end{lemma}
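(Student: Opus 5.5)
The plan is to rewrite $w(G)$ as a weighted edge count and then argue by induction on $n$, deleting the two endpoints of an edge that lies in no triangle. Let $F\subseteq E(G)$ be the set of edges lying in no triangle, so that $w(G)=2^{e(G)-|F|}3^{|F|}$. If $F=\emptyset$ then $w(G)=2^{e(G)}\le 2^{\binom n2}$ and we are done, so the real work is when $F\neq\emptyset$.

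\textbf{Small cases.} The inequality is false for very small $n$: a single edge gives $w=3>2$, the path $P_3$ gives $9>8$, and $C_4$ gives $81>64$. So I would prove the statement for $n\ge 5$, which is all that the application to $G(n,p)$ with $n\to\infty$ needs. As auxiliary data I record the exact maxima of $w$ for $n=3$ and $n=4$.
\begin{itemize}
\item For $n=3$ the maximum is $9$, attained by $P_3$; a triangle gives only $8$.
\item For $n=4$ the maximum is $81$, attained by $C_4$. A graph with at least $5$ edges on $4$ vertices contains a triangle, and the candidates give at most $32$ ($K_4$ minus an edge) or $24$ (a triangle with a pendant edge). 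Triangle-free graphs with at most $3$ edges give at most $27$, e.g.\ $K_{1,3}$.
\end{itemize}

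\textbf{Key monotonicity step.} Fix $uv\in F$ and put $G'=G-u-v$. Deleting vertices can only destroy triangles. Hence every edge of $G'$ has weight in $G'$ at least its weight in $G$, and
\[
w(G)\le w(G')\cdot w(uv)\cdot\prod_{x\notin\{u,v\}}\ \prod_{\substack{e\in\{ux,vx\}\cap E(G)}} w(e).
\]
Because $uv$ lies in no triangle, $N(u)\cap N(v)=\emptyset$. So each $x\notin\{u,v\}$ is adjacent to at most one of $u,v$, and at most $n-2$ edges join $\{u,v\}$ to the rest. Bounding every weight by $3$ gives
\[
w(G)\le w(G')\cdot 3^{n-1}.
\]
Since $\binom n2-\binom{n-2}2=2n-3$, the induction step needs $3^{n-1}\le 2^{2n-3}$, i.e.\ $(4/3)^{n-1}\ge 2$. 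This holds for all $n\ge 4$.

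\textbf{Induction with bases.} We show $w(G)\le 2^{\binom n2}$ for all $n\ge 5$, using two bases because the step removes two vertices.
\begin{itemize}
\item For $n=5$: either $F=\emptyset$, or the step applied to the $3$-vertex graph $G'$ gives $w(G)\le 9\cdot 3^4=729\le 2^{10}$.
\item For $n=6$: either $F=\emptyset$, or the step applied to the $4$-vertex graph $G'$ gives $w(G)\le 81\cdot 3^5=19683\le 2^{15}$.
\item For $n\ge 7$: either $F=\emptyset$, or we apply the step with $G'$ on $n-2\ge 5$ vertices and the induction hypothesis. This gives $w(G)\le 2^{\binom{n-2}2}\,3^{n-1}\le 2^{\binom n2}$.
\end{itemize}

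The main obstacle is the monotonicity step, specifically justifying that all weights inside $G'$ can only increase after deletion. This is immediate once one notes that the triangle-membership status of an edge of $G'$ can only be lost, never gained, when passing from $G$ to $G'$. After that, the remaining work is the small-case bookkeeping and the elementary inequality $3^{n-1}\le 2^{2n-3}$.
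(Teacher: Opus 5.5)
The paper gives no proof of this lemma; it is quoted from \cite{benevides2017edge}, so there is no argument in the text to compare your route against. Your proposal is a correct, self-contained proof for $n\ge 5$. Each step holds:
\begin{itemize}
\item weights of edges of $G'$ can only increase under vertex deletion;
\item $N(u)\cap N(v)=\emptyset$ gives at most $n-2$ edges between $\{u,v\}$ and the rest;
\item $\binom n2-\binom{n-2}2=2n-3$, and $(4/3)^{n-1}\ge 2$ holds for $n\ge 4$;
\item the two bases check out: $9\cdot 3^4=729\le 2^{10}$ and $81\cdot 3^5=19683\le 2^{15}$.
\end{itemize}

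Your observation that the statement fails for $n\in\{2,3,4\}$ (witnessed by $K_2$, $P_3$, $C_4$) is also correct. So the lemma as quoted in the paper is missing a hypothesis such as $n\ge 5$. This does not hurt the paper, but the relevant size is not the $n$ of $G(n,p)$, as you phrase it. The lemma is only applied to the reduced graph $R'$ on $k$ vertices. Here $k\ge 1/\varepsilon'$ with $\varepsilon'=\varepsilon/2<\alpha/4\le 1/32$, so $k\ge 32$.

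There is one small slip in your $n=4$ enumeration. You omit $K_4$ itself, which has $w(K_4)=2^6=64$. You also list the triangle with a pendant edge among the graphs with at least $5$ edges, though it has only $4$. Since $64<81$, the maximum $81$ you use in the $n=6$ base is still right.
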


\subsection{Upper range of $p$}

In this section we prove the following Lemma~\ref{lemma:main2}, which concludes
the proof of Theorem~\ref{thm:mainresult}.

\begin{proof}[Proof of Lemma~\ref{lemma:main2}]
      Fix $\delta>0$ and $\xi=\delta/4$. Furthermore, let $0<\alpha\leq 1/8$
      such that $6\alpha+H(2\alpha)\leq\delta/16$, where $H$ is the \emph{binary
      entropy function} $H: [0,1] \to \mathbb{R}$, defined as
      \[
            H(x)=-x\log_2(x) - (1-x)\log_2(1-x).
      \]
      This is possible because $H(x) \to 0$ as $x \to 0$ and it will be useful due to the well known estimate: 
      \[
             \binom{m}{x m} \leq 2^{H(x)m}.
      \] 

      Let $p \geq C n^{-1/2}$ and $G=G(n,p)$ for a
      sufficiently large $n$. We shall prove that with high probability $\Gallai(G)\leq 2^{(1+\delta)e(G)}$.

      Let $\alpha' = \frac{\alpha}{4}$ and let $\varepsilon$ be given by the
      embedding result (Lemma~\ref{lemma:imersaoGNP}) applied with $\alpha'$.
      Since for $0 < \varepsilon' \le \varepsilon$, every
      $(\varepsilon',p)$-regular pair is also $(\varepsilon,p)$-regular, we may
      and shall assume that $\varepsilon<\alpha/2$.  Furthermore, set
      $\varepsilon'= \varepsilon/2$.

      Now we apply the Sparse Regularity Lemma
      (Lemma~\ref{lemma:multicolourSparseReg}) with $\varepsilon'$ and the
      integers $m=1/\varepsilon'$ and $r = 3$ colours, obtaining constants
      $\eta$ and $M$. Finally, continuing the application of
      Lemma~\ref{lemma:imersaoGNP} with $\beta = (1-\varepsilon')/M$, we obtain
      a constant $C$. Let $p>C n^{-1/2}$ and $G=G(n,p)$ for a sufficiently large
      $n$. We shall prove that with high probability $\Gallai(G)\leq
      2^{(1+\delta)e(G)}$.     

      From now on we consider an arbitrary Gallai $3$-colouring of $E(G)$ (i.e.,
      with no rainbow triangles) with colours $1$, $2$ and $3$. As we discussed
      before, for $p \ge C n^{-1/2} \gg 1/n$, the graph $G$ is $(\eta,p,2)$-upper-uniform
      with high probability (it follows from Chernoff's inequality). Thus, from
      the Sparse Regularity Lemma (Lemma~\ref{lemma:multicolourSparseReg}),
      there is an $(\varepsilon,p)$-regular partition $\mathcal{P}=(V_0, \ldots,
      V_k)$ of $V(G)$ with $m\leq k\leq M$. Also from Chernoff's inequality,
      almost surely it holds that
      \begin{align*}
      &\big||E_G(V_i,V_j)| - |V_i||V_j|p\big| \leq \xi |V_i||V_j|p,\text{ and} \\
      &|E_G(V_i)|\leq(1+\xi)p{\binom{|V_i|}{2}} \leq (1+\xi)pn^2.
      \end{align*}

      Let $G_1$, $G_2$ and $G_3$, respectively, be the spanning subgraphs of
      $G$ induced by edges with colours $1$, $2$ and $3$. We consider the reduced graphs
      $R_\ell=R(G,\varepsilon',\alpha/4,\ell,\mathcal{P})$ for $\ell \in \{ 1, 2, 3 \}$, 
      which we recall denotes the graph with vertex set $[k]$, where $\{i,j\} \in E(R_\ell)$ if and only if
      $\{V_i,V_j\}$ is $(\varepsilon',G_\ell,p)$-regular, and
      $d_{G_\ell,p}(V_i,V_j) \geq \alpha/4$.  Finally, consider the graph $R$ with vertex set
      $[k]$ and edges $E(R_1)\cup E(R_2)\cup E(R_3)$.

      We shall prove that $\Gallai(G)\leq 2^{(1+\delta)e(G)}$. For that, we will
      give an upper bound on the number of 3-colourings of $G$ that specifically
      give rise to a fixed partition $\mathcal{P}$ and fixed reduced graphs
      $R_1, R_2$, and $R_3$. Finally, we will multiply this bound by the number
      of possible partitions and reduced graphs.

      \subsection*{Edges that do not contribute to the reduced graphs}
      We estimate the number of edges of $G$ that do not
      contribute to edges of the reduced graphs as follows.
      \begin{enumerate}
      \item[(i)] edges within some class $V_i$ for $1\leq i\leq k$;
      \item[(ii)] edges between pairs that are not $(\varepsilon')$-regular for
            some $1\leq \ell\leq 3$;
      \item[(iii)] edges between $(\varepsilon$)-regular pairs $\{V_i,V_j\}$ (
            $1\leq i< j\leq k$) with $d_{G_\ell,p}(V_i,V_j) < \alpha/4$ for every
            $1\leq \ell \leq 3$;
      \item[(iv)] edges with one endpoint in $V_0$.
      \end{enumerate}
      From a simple application of the Chernoff's inequality and the union
      bound, we have w.h.p.\ that
      $|E_G(V_i)|\leq(1+\xi)p{\binom{|V_i|}{2}} \leq
            \frac{(1+\xi)pn^2}{2k^2}$, from where we conclude that the number of
      edges within some class $V_i$ for $1\leq i\leq k$ is w.h.p.\ at most
      \begin{equation}\label{contagem1}
            k\left(\frac{(1+\xi)pn^2}{k^2}\right)
            \leq  (1+\xi)\varepsilon'pn^2
            \leq  \frac{(1+\xi)\alpha pn^2}{4}.
      \end{equation}
      One can also easily check that with high probability, for any
      $1\leq i< j\leq k$ it holds that
      $|E_G(V_i,V_j)| \leq (1 + \xi) |V_i||V_j|p \leq \frac{(1 +
                  \xi)pn^2}{k^2}$, which since there are at most $ \varepsilon'k^2$
      pairs that are not $(\varepsilon')$-regulars, implies that the number
      of edges between such pairs is at most
      \begin{equation}\label{contagem2}
            \varepsilon'k^2\left(\frac{(1+\xi)pn^2}{k^2}\right)
            \leq
            \frac{  (1+\xi)\alpha n^2p}{4}.
      \end{equation}
      We proceed by observing that w.h.p.\ there are no
      $\varepsilon'$-regular pairs $\{V_i,V_j\}$ with density smaller than
      $\alpha/4$ in $G_1$, $G_2$ and $G_3$ as $e_G(V_i,V_j) \leq
            3(\alpha/4)p|V_i||V_j| < p|V_i||V_j|/2$, which contradicts the
      fact that w.h.p. one can check that $e_G(V_i,V_j) \geq
            p|V_i||V_j|/2$.
      Finally, to bound the number of edges incidents to vertices in
      $V_0$, we use the facts that $|V_0|\leq \varepsilon'n$ and w.h.p.\ each
      vertex of $V_0$ has degree at most $(1+\xi)np$, which implies that the
      number of such edges is at most
      \begin{equation}\label{contagem4}
            \varepsilon'n (1+\xi)pn\leq(1+\xi)\varepsilon'n^2p\leq \frac{(1+\xi)\alpha n^2p}{4}.
      \end{equation}
      Putting inequalities \eqref{contagem1}--\eqref{contagem4} together, we
      conclude that the number of edges not related to edges of the reduced
      graphs $R_1$, $R_2$ and $R_2$ is at most
      $$
            (1+\xi)\alpha n^2p.
      $$
      Therefore, since w.h.p.\ $e(G)\leq (1 + \xi)pn^2/2$, the number of
      possible choices for these edges is w.h.p.\ at most
      \begin{align}
            \sum_{i=0}^{\lfloor(1+\xi)\alpha pn^2\rfloor}\binom{{(1+\xi)pn^2}/{2}}{i}
             & \leq 2\alpha pn^2\binom{{(1+\xi)pn^2}/{2}}{2\alpha pn^2}
            \leq 2^{\alpha pn^2}\binom{{pn^2}}{2\alpha pn^2}\nonumber            \\
             & \leq 2^{(pn^2)(\alpha+H(2\alpha))},\label{eq:arestasForaReduzido}
      \end{align}
      where the first inequality follows from
      $i \leq (1+\xi)\alpha pn^2 \leq 2\alpha pn^2 \leq (1+\xi)pn^2/2$, and
      the last one holds for sufficiently large $n$ where we used
      $\binom{x}{2\alpha x}\leq 2^{H(2\alpha)\cdot x}$.

      Recall that we want to bound the number of $3$-colourings of $E(G)$
      that give rise to the partition $\mathcal{P}$ and the reduced graphs
      $R_1, R_2$, and $R_3$. Any of the edges that do not contribute
      to pairs $\{V_i,V_j\}$ of $G$ that generate edges in these reduced
      graphs can receive any of the three possible colours. Therefore,
      the total number of possible colourings for all these (at most
      $(1+\xi)\alpha n^2p \leq 2\alpha n^2p$) edges is at most
      \begin{equation}
            3^{2{p}\alpha n^2}.\label{eq:arestasForaReduzido2}
      \end{equation}

      \subsection*{Edges that  contribute to reduced graphs}

      Now we count the set of edges that actually gives rise to the reduced
      graph $R$. Given an edge $ij$ of $R$ (recall that $R$ is the graph
      with vertex set $[k]$ and edges $E(R_1)\cup E(R_2)\cup E(R_3)$),
      denote by $s_{i,j} \in \{1, 2, 3\}$ the number of reduced graphs among
      $R_1, R_2$, and $R_3$ in which $ij$ is an edge of it. We first need to
      obtain an upper bound to the number of
      \begin{itemize}
            \item edges between $(\varepsilon')$-regular pairs $\{V_i,V_j\}$ ($1\leq i< j\leq k$) with $d_{H_\ell,p}(V_i,V_j) \geq \alpha/4$ for some
                  $1\leq \ell \leq 3$.
      \end{itemize}
      Let $ij \in E(R)$ and note that if $ij$ does not belong to $R_s$, then
      $e_{H_s}(V_i,V_j) \leq (\alpha/4)p(n/k)^2$. On the other hand, if $ij$
      is an edge of $R_s$, then we know that w.h.p.\ we have
      $e_{H_s}(V_i,V_j) \leq (1+\xi)pn^2/k^2$. Therefore, the
      number of possible ways to colour the edges between
      $V_i$ and $V_j$ in $G$ is at most
      $$
            s_{i,j}^{{p(1+\xi)}n^2/k^2}(3-s_{i,j})^{(\alpha/4)p(n/k)^2}
            \leq
            s_{i,j}^{{p(1+\xi)}n^2/k^2}3^{(\alpha/4)p(n/k)^2}.
      $$
      Now, for $s\in \{1, 2, 3\}$, let $E_s$ be the set of edges in the reduced graph that appear in
      exactly $s$ reduced graphs, and $e_s=|E_s|$. Thus, the
      total number of colourings that generate $R$ is at most
      $$
            \left(1^{e_1}2^{e_2}3^{e_3}\right)^{{p(1+\xi)}(n/k)^2}3^{\alpha p(n/k)^2}.
      $$
      This together with~\eqref{eq:arestasForaReduzido}
      and~\eqref{eq:arestasForaReduzido2} implies that the potential number
      of 3-colourings of $G$ that can generate this particular partition
      $\mathcal{P}$ of vertices and the reduced graphs $R_1$, $R_2$ and
      $R_3$ is at most
      \begin{align}
                 & 2^{(pn^2)(\alpha+H(2\alpha))}3^{2{p}\alpha n^2}3^{\alpha
            p(n/k)^2}(1^{e_1}2^{e_2}3^{e_3})^{{p(1+\xi)}(n/k)^2}           \nonumber \\ 
            &\leq  2^{(pn^2)(\alpha+H(2\alpha))}2^{5\alpha
            pn^2}(1^{e_1}2^{e_2}3^{e_3})^{{p(1+\xi)}(n/k)^2}                \nonumber \\
            &= 2^{(6\alpha+H(2\alpha))
                        pn^2}(1^{e_1}2^{e_2}3^{e_3})^{{p(1+\xi)}(n/k)^2}.
            \label{eq:contagemCrua}
      \end{align}
      We will show now that
      $(1^{e_1}2^{e_2}3^{e_3})^{{p(1+\xi)}(n/k)^2} \leq
            2^{{p(1+\xi)}n^2/2}$. 

            For that, consider the function $f$ from edges of $R$ to $\{1, 2, 3\}$ that assigns
      value $i$ to the edges belonging to $E_i$ for every $i\in \{1, 2, 3\}$. To simplify our analysis,
      define $R'$ as the subgraph of $R$ obtained by deleting edges of $E_1$, i.e., edges that appear only in one of the reduced
      graphs $R_1$, $R_2$, $R_3$. Note that
      \[
            1^{e_1}2^{e_2}3^{e_3} = 2^{e_2}3^{e_3} = \prod_{e\in E(R')}f(e).
      \]

      The definition of $R'$ together with the embedding lemma for $G(n,p)$ (Lemma~\ref{lemma:imersaoGNP}) implies the following.
      \begin{claim}\label{afirmacao1} Almost surely (with respect to the choice of $G$), no triangle in $R'$ contains an edge of weight 3.
      \end{claim}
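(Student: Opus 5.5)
Suppose, for contradiction, that some triangle $\{i,j,h\}$ of $R'$ contains an edge, say $ij$, with $f(ij)=3$. Then $ij$ lies in all three reduced graphs $R_1,R_2,R_3$, while each of $ih$ and $jh$ lies in at least two of them. We choose distinct colours greedily: choose $a\in\{1,2,3\}$ with $ih\in E(R_a)$, then $b\neq a$ with $jh\in E(R_b)$ (possible since $jh$ is in at least two reduced graphs), and finally let $c$ be the remaining colour, so $ij\in E(R_c)$. By definition of the reduced graphs, each of the pairs $\{V_i,V_h\}$, $\{V_j,V_h\}$, $\{V_i,V_j\}$ is then $(\varepsilon',G_t,p)$-regular for every colour $t$, and has $p$-density at least $\alpha/4$ in $G_a$, $G_b$ and $G_c$ respectively. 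Any transversal triangle in the tripartite graph formed by these three colour classes is a rainbow triangle in $G$, contradicting that the colouring is Gallai. So it suffices to show that, w.h.p.\ over the choice of $G$, every such configuration contains a transversal triangle.

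To do this, we normalise the configuration to fit Lemma~\ref{lemma:imersaoGNP} with $H=K_3$, which has $m_2(K_3)=2$, so that $p\ge Cn^{-1/2}$ is exactly the required range. First, equalise class sizes: set $n'=|V_i|=|V_j|=|V_h|\ge (1-\varepsilon')n/k\ge(1-\varepsilon')n/M=\beta n$. Second, each relevant colour class pair is $(\varepsilon',p)$-regular with at least $(\alpha/4)pn'^2$ edges, which translates to $(\varepsilon')$-regularity in the dense normalisation since $p$-density and density differ by the factor $p$. Third, apply Lemma~\ref{lema2motaantiramsey2} to each of the three bipartite graphs to extract spanning $(2\varepsilon')=(\varepsilon)$-regular subgraphs with exactly $m=\alpha' n'^2p=(\alpha/4)n'^2p$ edges. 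This needs $t\cdot 2n'\le m$, i.e.\ $pn'\gg 1$, which holds since $p\ge Cn^{-1/2}$. The resulting subgraph of $G$ lies in $\mathcal{G}(K_3,n',\alpha' n'^2p,\varepsilon)$.

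Lemma~\ref{lemma:imersaoGNP}, applied with $\alpha'$, $\varepsilon$, $\beta$ and $C$ as fixed in the proof, says that w.h.p.\ $G$ contains no member of $\mathcal{F}(K_3,n',\alpha'n'^2p,\varepsilon)$ for any $n'>\beta n$. The event is uniform over all partitions and colourings, since it is a single statement about $G$. Hence the subgraph constructed above contains a transversal triangle, which is rainbow, and this is the desired contradiction.

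The main obstacle is the bookkeeping of the regularity notions. The paper's $(\varepsilon,p)$-regularity controls $p$-densities up to an additive $\varepsilon$, whereas $(\varepsilon)$-regularity is the dense notion with $d=e/(|X||Y|)$. I would check that sparse regularity with density at least $\alpha/4$ gives the hypothesis of Lemma~\ref{lema2motaantiramsey2} with the right parameter; this may need slightly adjusted constants, and is where $\varepsilon<\alpha/2$ and $\varepsilon'=\varepsilon/2$ enter. I would also check that exactly $m$ edges is achievable with $m\le$ the number of edges of colour $a$ (resp.\ $b$, $c$) in the pair.
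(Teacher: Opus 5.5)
Your proposal is correct and follows essentially the same route as the paper: choose three distinct colours on the pairs of the triangle, use Lemma~\ref{lema2motaantiramsey2} to trim each colour class to an $(\varepsilon)$-regular pair with exactly $\alpha' n'^2 p$ edges, and apply Lemma~\ref{lemma:imersaoGNP} with $H=K_3$ (a single w.h.p.\ event, uniform over all colourings) to get a transversal, hence rainbow, triangle. The bookkeeping point you flagged is a real imprecision that the paper shares: additive $(\varepsilon',p)$-regularity with $p$-density at least $\alpha/4$ only gives relative $(4\varepsilon'/\alpha)$-regularity, not $(\varepsilon')$-regularity, so $\varepsilon<\alpha/2$ and $\varepsilon'=\varepsilon/2$ do not suffice; one should instead apply the regularity lemma with $\varepsilon'\le\alpha\varepsilon/8$, which does not disturb the order of quantifiers.
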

      We will postpone the proof of this claim. For now, note that it implies $f(e) \le w(e)$ for every edge in $R'$, where $w$ is the Gallai function of the graph $R'$, as defined before Lemma \ref{lema4.2-HoppenBeneSampaio}. Therefore, $\prod_{e\in E(R')}f(e) \le w(R')$. 
      And by Lemma \ref{lema4.2-HoppenBeneSampaio}, $ w(R') \leq 2^{\binom{k}{2}}\leq
            2^{k^2/2}$. These imply that, in fact,
      \begin{equation}\label{eq:arestasReduzido1}
            (1^{e_1}2^{e_2}3^{e_3})^{{p(1+\xi)}(n/k)^2} \leq 2^{{p(1+\xi)}n^2/2}.
      \end{equation}

      To finish our proof we calculate the number of possible distinct
      partitions and reduced graphs. The number of vertex partitions is
      at most $\sum_{k=m}^M k^n$, while, for a fixed partition with $k$ parts, the number
      of distinct triples of reduced graphs $(R_1, R_2, R_3)$ is at most
      $8^{\binom{k}{2}}$, since for each of the $\binom{k}{2}$ pairs of classes in the partition there are 8 possible ways to decide to which of $R_1$, $R_2$, $R_3$ it belongs. Altogether, the number of possible partitions and reduced graphs is at most \[
            M^{n+1}\,8^{\binom{M}{2}} < 2^{\delta pn^2 /16},
      \]
      as $M$ and $\delta$ are positive constants and $pn^2 \gg n$.

      Finally, by Equation~\eqref{eq:contagemCrua} and
      its developments, and due to our initial choice of $\alpha$, taking
      $n$ sufficiently large to satisfy all asymptotic assertions we made,
      with high probability we have the following:
      \begin{align}
            \Gallai(G) & \leq \left[M^{n+1}\cdot2^{3\binom{M}{2}}\right]\cdot \left[2^{(6\alpha +H(2\alpha))pn^2}\right]\cdot \left[2^{{(1+\xi)}n^2p/2}\right]
            \nonumber                                                                                                                         \\
                       & \leq \left[2^{\delta pn^2 /16}\right]\cdot\left[2^{\delta pn^2/16}\right]\cdot\left[2^{{(1+\delta/4)p n^2}/2}\right]
            \nonumber                                                                                                                         \\
                       & = 2^{\left(pn^2/2\right)\left[\delta/8+\delta/8+(1+\delta/4)\right]}
            \nonumber                                                                                                                         \\
                       & = 2^{\left(p{n^2/2}\right)(1+\nicefrac{\delta}{2})}
            \nonumber                                                                                                                         \\
                       &\leq 2^{(1+\delta)e(G)}\label{eq:final2},
      \end{align}
      where the last inequality holds with high probability due to Chernoff's inequality. In fact, taking a constant $\lambda > 0$ such that 
      \[
            1 - \lambda > \frac{1+\nicefrac{\delta}{2}}{1+\delta},
      \]
      it follows that with high probability
      \[
            e(G) > (1-\lambda)p\binom{n}{2} > \frac{(1+\nicefrac{\delta}{2})}{(1+\delta)}\frac{pn^2}{2},
      \]
      for $n$ sufficiently large. This finishes the proof of Lemma~\ref{lemma:main2}
\end{proof}

\begin{proof}[{\bf Proof of Claim \ref{afirmacao1}}]
Consider a Gallai $3$-colouring of $G = G(n,p)$, and the construction of $R'$
as in the proof of Lemma~\ref{lemma:main2}. Suppose for a contradiction that there is a triangle $S$ in $R'$ that contains an edge of weight~$3$. Without loss of generality, let $S$ be a triangle in $R'$ with vertices $u, v, w$ and edges $uv, vw, wu$ with weights $3,2,2$ respectively. (Recall that in \(R'\), there
are no edges of weight 1 and if the weights of the edges of $R'$ were $(3, 3, 2)$ or $(3, 3, 3)$ we can simply ignore some of the colours). 

We will use the embedding lemma for subgraphs of $G = G(n,p)$
(Lemma~\ref{lemma:imersaoGNP}) to show that the induced subgraph of $G$ on
$V(S)$ contains a rainbow triangle, which contradicts the fact that $G$ is coloured with a Gallai colouring.

The fact that the edge $uv$ has weight $3$ means that it belongs to all 3
reduced graphs $R_1, R_2, R_3$ and the fact that $vw$ and $wu$ have weight 2
means that each of them belongs to two reduced graphs. Assume, without loss of
generality, that $wu \in E(R_3)$. Since $vw$ belongs to 2 of the reduced graphs,
at least one of them is different from $R_3$. Suppose, again without loss of
generality, that $vw \in E(R_2)$. Furthermore, since $uv$ belongs to all three
reduced graphs, in particular, it belongs to $E(R_1)$. Thus, we have $uv \in
E(R_1)$, $vw \in E(R_2)$ and $wu \in E(R_3)$. Let $E_1$ be the set of edges of
colour 1 in $G[V_u,V_v]$, $E_2$ the set of edges of colour 2 in $G[V_v,V_w]$ and
$E_3$ the set of edges of colour 3 in $G[V_w,V_u]$.  Assume w.l.o.g.\;that
$|E_1| = \min(|E_1|, |E_2|, |E_3|)$. We now consider \(G_S\), the subgraph of
\(G\) induced by the vertex set \(V_{u}\cup V_v \cup V_w\) and edge set \(E_1
\cup E_2 \cup E_3\).

By Lemma \ref{lema2motaantiramsey2}, applied to each of the induced bipartite
graphs $G[V_i,V_j]$ for $i,j \in \{u,v,w\}$, $i \neq j$, with parameter
\(\varepsilon'\), we obtain constants $t_1$, $t_2$ and $t_3$. Assume that $t_1 =
\max\{t_1,t_2,t_3\}$. From the conclusion of Lemma~\ref{lema2motaantiramsey2},
we know that there exists a spanning tripartite subgraph $J$ of \(G_S\) that is
\((\varepsilon)\)-regular (since $\varepsilon' = \varepsilon/2$) with exactly
\(m\) edges between each pair of parts, for every \(m\) satisfying
\begin{equation}
\label{eq:t1}
t_1\cdot2\left(\frac{n-|V_0|}{k}\right) \le m \le |E_1|.
\end{equation} 

Put \(m=\alpha'\left(\frac{n-|V_0|}{k}\right)^{2} p\) and let us show that we have \(t_1\cdot2\left(\frac{n-|V_0|}{k}\right) \le m \le |E_1|\). Note that, for $1\leq \ell\leq 3$, by the definition of the reduced graph $R_\ell = R(G, \varepsilon', \alpha/4, \ell, \mathcal{P})$, since \(|V_i|=(n-|V_0|)/k\) for every possible $i$, we have that $|E_\ell| \geq \frac{\alpha p}{4}  \left(\frac{n-|V_0|}{k}\right)^{\!2}$. 
Therefore,
 \[
m =  \alpha'p \left(\frac{n-|V_0|}{k}\right)^{\!2} = \frac{\alpha p}{4} \left(\frac{n-|V_0|}{k}\right)^{\!2} \leq |E_1|.
 \]  
Also, since $n$ is sufficiently large,
 \[
m =  \alpha'p \left(\frac{n-|V_0|}{k}\right)^{\!2} \geq t_1\cdot2\left(\frac{n-|V_0|}{k}\right),
 \]
 which confirms that inequalities in~\eqref{eq:t1} hold for our choice of $m$.

Note that the subgraph $J$ is a spanning tripartite subgraph of $G_S$ with parts $V_u, V_v, V_w$ and exactly $m$ edges between each pair of parts, where each such pair is $(\varepsilon)$-regular. Therefore, $J$ is a member of the family \(\mathcal{G} =
\mathcal{G}\left(K_3, \frac{n-|V_0|}{k}, m, \varepsilon\right)\).

From Lemma \ref{lemma:imersaoGNP}, we know that almost surely $J$ is not a member of the family \(\mathcal{F} = \mathcal{F}\left(K_3,\frac{n-|V_0|}{k}, m, \varepsilon\right)\). This says that the coloring of $G$ contains a rainbow triangle, which is a contradiction because we started with a Gallai colouring. This finishes the proof of Claim~\ref{afirmacao1}.
\end{proof}

\section{Concluding remarks} \label{sec:conclusion}
We conjecture that the natural generalization of Theorem~\ref{thm:mainresult} for $k$-colourings of Gallai is true, with the same threshold function $O(n^{-1/2})$. We also believe it is possible to improve the estimates in such a theorem to something of the form $\Gallai_k(G) \le (1+\delta)2^{|E(G)|}$ for $G = G(n,p)$ with $p \ge Cn^{-1/2}$ when $k$ is constant and $n$ is sufficiently large.

\bibliographystyle{amsplain}
\bibliography{bibliography.bib}

\section{Appendix: Concentration inequalities}

We apply Chebyshev’s inequality and Chernoff’s inequality in the following forms.

\begin{lemma}[Chebyshev inequality]\label{lem:chebyshev}
      Let $X$ be a non-negative random variable and $\lambda$ be positive. Then
      \begin{equation*}
            \Pr\!\big(|X-\mathbb{E}(X)|>\lambda) \leq \frac{\operatorname{Var}(X)}{\lambda^2},
      \end{equation*}
\end{lemma}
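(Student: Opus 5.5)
This is the standard Chebyshev bound, and the plan is to derive it from Markov's inequality applied to a squared deviation. Put $\mu=\mathbb{E}(X)$; we may assume $\Var(X)<\infty$, since otherwise the right-hand side is $+\infty$ and there is nothing to prove. Define the auxiliary random variable $Y=(X-\mu)^2$. It is non-negative, and by definition $\mathbb{E}(Y)=\Var(X)$. The non-negativity of $X$ in the statement is not actually needed; only the existence of the mean and the finiteness of the variance are used.

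The key observation is that the events $\{|X-\mu|>\lambda\}$ and $\{Y>\lambda^2\}$ coincide. This holds because $t\mapsto t^2$ is strictly increasing on $[0,\infty)$ and $\lambda>0$.

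Next we apply Markov's inequality to $Y$. If it is not taken as known, it follows in one line from the pointwise inequality
\[
\lambda^2\,\mathbf{1}\{Y>\lambda^2\}\le Y,
\]
which holds because $Y\ge 0$. Taking expectations of both sides gives $\lambda^2\Pr(Y>\lambda^2)\le \mathbb{E}(Y)$.

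Combining the two steps yields
\[
\Pr\big(|X-\mu|>\lambda\big)=\Pr(Y>\lambda^2)\le \frac{\mathbb{E}(Y)}{\lambda^2}=\frac{\Var(X)}{\lambda^2}.
\]
No step here is a genuine obstacle. The only points to handle with care are the equivalence of the two events, which relies on $\lambda>0$, and the trivial case of infinite variance.
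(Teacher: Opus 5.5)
Your proof is correct. Applying Markov's inequality to $(X-\mathbb{E}(X))^2$ is the standard derivation, and the paper gives no proof of its own, only quoting the inequality in the appendix as a standard tool. Your remarks that the non-negativity of $X$ is unnecessary and that the case $\Var(X)=\infty$ is trivial are also accurate.
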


\begin{lemma}[Chernoff inequality]\label{chernoff}
      Let $X$ be a random variable with binomial distribution $B(n,p)$ and let $\mu=\mathbb{E}(X)$. Then, for every $ 0\leq\xi\leq1 $ we have:
      \begin{align*}
            \Pr\!\big(X \leq \left(1-\xi\right)\mu\big) \leq \exp\left(\frac{-\xi^2\mu}{2}\right); \\
            \Pr\!\big(X \geq \left(1+\xi\right)\mu\big) \leq \exp\left(\frac{-\xi^2\mu}{2+\xi}\right).
      \end{align*}
\end{lemma}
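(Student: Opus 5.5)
We will prove both tail bounds with the exponential moment method. Write $X=\sum_{i=1}^n X_i$ with $X_i$ independent Bernoulli$(p)$, so that $\mu=np$. For any $t>0$, Markov's inequality applied to $e^{tX}$ together with independence gives
\[
\Pr\big(X\ge (1+\xi)\mu\big)\le e^{-t(1+\xi)\mu}\,\mathbb{E}\big(e^{tX}\big)=e^{-t(1+\xi)\mu}\big(1+p(e^t-1)\big)^n\le \exp\big(\mu(e^t-1)-t(1+\xi)\mu\big),
\]
where the last step uses $1+x\le e^x$. Similarly, applying Markov's inequality to $e^{-tX}$ gives
\[
\Pr\big(X\le (1-\xi)\mu\big)\le \exp\big(\mu(e^{-t}-1)+t(1-\xi)\mu\big).
\]

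Next we optimize over $t$. For the upper tail we take $t=\ln(1+\xi)$, which yields the bound $\exp\big(-\mu\,[(1+\xi)\ln(1+\xi)-\xi]\big)$. For the lower tail with $0\le\xi<1$ we take $t=-\ln(1-\xi)$, which yields $\exp\big(-\mu\,[(1-\xi)\ln(1-\xi)+\xi]\big)$. The endpoint $\xi=1$ of the lower tail is handled directly:
\[
\Pr(X\le 0)=(1-p)^n\le e^{-\mu}\le e^{-\mu/2}.
\]
The case $\xi=0$ is trivial.

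It therefore remains to prove two elementary real inequalities, valid for $0\le \xi\le 1$:
\[
\varphi(\xi):=(1+\xi)\ln(1+\xi)-\xi-\frac{\xi^2}{2+\xi}\ge 0
\qquad\text{and}\qquad
\psi(\xi):=(1-\xi)\ln(1-\xi)+\xi-\frac{\xi^2}{2}\ge 0 .
\]
This is the only step needing any care. For $\psi$ we have $\psi(0)=0$ and $\psi'(\xi)=-\ln(1-\xi)-\xi\ge 0$, since $-\ln(1-\xi)=\sum_{j\ge 1}\xi^j/j\ge \xi$.

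For $\varphi$ we again have $\varphi(0)=0$. Using $\frac{d}{d\xi}\frac{\xi^2}{2+\xi}=\frac{\xi(4+\xi)}{(2+\xi)^2}$, we get
\[
\varphi'(\xi)=\ln(1+\xi)-\frac{\xi(4+\xi)}{(2+\xi)^2}.
\]
Since $\varphi'(0)=0$, it suffices to show $\varphi''\ge 0$. We compute
\[
\varphi''(\xi)=\frac{1}{1+\xi}-\frac{8}{(2+\xi)^3},
\]
so $\varphi''\ge 0$ is equivalent to $(2+\xi)^3\ge 8(1+\xi)$. Expanding, this reads $\xi^3+6\xi^2+12\xi+8\ge 8+8\xi$, i.e. $\xi^3+6\xi^2+4\xi\ge 0$, which clearly holds for $\xi\ge 0$.

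Substituting these two inequalities into the optimized exponential bounds gives exactly the two stated inequalities, namely $\exp(-\xi^2\mu/2)$ for the lower tail and $\exp\big(-\xi^2\mu/(2+\xi)\big)$ for the upper tail.
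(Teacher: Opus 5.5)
Your argument is correct and complete. The exponential-moment bounds, the choices $t=\ln(1+\xi)$ and $t=-\ln(1-\xi)$, the separate handling of $\xi=1$ in the lower tail, and the two calculus inequalities for $\varphi$ and $\psi$ all check out.

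The paper gives no proof of this lemma; it only quotes the standard Chernoff bound in the appendix. Your derivation is the usual textbook proof that fills that in. As a small bonus, $\varphi''\ge 0$ on all of $[0,\infty)$, so your upper-tail bound holds for every $\xi\ge 0$, not only for $\xi\le 1$.
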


\begin{corollary}\label{cor:chernoffAplicacao}
      With the assumptions of Lemma \ref{chernoff}, if $\xi$ is fixed and
      $\mu \to \infty$ as $n\to \infty$, then with high probability we have
      \begin{equation*}
            \big|X - \mu \big| \leq \xi \mu.
      \end{equation*}
\end{corollary}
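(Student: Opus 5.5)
The plan is to derive the corollary directly from the two tail bounds of Lemma~\ref{chernoff} by a union bound. The event $\abs{X-\mu} > \xi\mu$ is the union of the events $X < (1-\xi)\mu$ and $X > (1+\xi)\mu$. These are contained in $X \le (1-\xi)\mu$ and $X \ge (1+\xi)\mu$ respectively, so the probability of failure is at most
\[
\exp\left(\frac{-\xi^2\mu}{2}\right) + \exp\left(\frac{-\xi^2\mu}{2+\xi}\right),
\]
provided $0 \le \xi \le 1$ so that Lemma~\ref{chernoff} applies.

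Since $\xi>0$ is a fixed constant and $\mu \to \infty$, both exponents tend to $-\infty$. Hence the sum tends to $0$, which is exactly the statement that $\abs{X-\mu}\le \xi\mu$ holds with high probability. The case $\xi = 0$ is excluded implicitly, since the statement is intended for fixed positive $\xi$.

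For completeness I would also handle a fixed $\xi > 1$, even though Lemma~\ref{chernoff} is stated only for $\xi \le 1$. In that case the lower event $X < (1-\xi)\mu < 0$ is impossible because $X \ge 0$. The upper event $X > (1+\xi)\mu$ is contained in $X \ge 2\mu$, whose probability is at most $\exp(-\mu/3)\to 0$ by the $\xi=1$ case.

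There is no real obstacle here. The only points needing care are the passage from strict to non-strict inequalities when taking complements, and the restriction on the range of $\xi$, both handled above.
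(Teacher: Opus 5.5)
Your union-bound argument is correct and is exactly the intended one. The paper gives no separate proof; it treats the corollary as an immediate consequence of the two tail bounds in Lemma~\ref{chernoff}, and both terms vanish because $\xi>0$ is fixed and $\mu\to\infty$. Your side remarks are also sound: the statement genuinely needs $\xi>0$, and your extension to fixed $\xi>1$ (using $X\ge 0$ and the $\xi=1$ upper tail $\exp(-\mu/3)$) is valid, though not needed since the corollary inherits $0\le\xi\le 1$ from the lemma.
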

\end{document}